\newtheorem{thm}{Theorem}[section]
\newtheorem{cor}[thm]{Corollary}
\newtheorem{lem}[thm]{Lemma}
\theoremstyle{definition}
\newtheorem{defin}[thm]{Definition}
\theoremstyle{definition}
\theoremstyle{definition}
\newtheorem{exm}[thm]{Example}
\newtheorem{remark}[thm]{Remark}
\theoremstyle{remark}
\newtheorem*{rem}{Remark}
\DeclareMathOperator*{\moplus}{\text{\raisebox{0.28ex}{\scalebox{0.75}{$\bigoplus$}}}}
\DeclareMathOperator*{\mvee}{\text{\raisebox{0.28ex}{\scalebox{0.7}{$\bigvee$}}}}
\DeclareMathOperator*{\mwedge}{\text{\raisebox{0.28ex}{\scalebox{0.7}{$\bigwedge$}}}}
\DeclareMathOperator*{\mprod}{\text{\raisebox{0.28ex}{\scalebox{0.67}{$\prod$}}}}
\DeclareMathOperator*{\lprod}{\text{\raisebox{0.28ex}{\scalebox{0.76}{$\prod$}}}}
\DeclareMathOperator*{\mcup}{\text{\raisebox{0.28ex}{\scalebox{0.78}{$\bigcup$}}}}
\begin{document}
\def\X#1#2{r(v^{#2}\ds{\prod_{i \in #1}}{x_{i}})}
\def\skp#1{\vskip#1cm\relax}
\def\mr#1{\mathring{#1}}
\def\nm#1{\mbox{{\normalsize $#1$}}}
\def\lm#1{\mbox{{\large $#1$}}}
\def\block{\rule{2.4mm}{2.4mm}}
\def\nd{\noindent}
\def\Z{\mathbb{Z}}
\def\becomes{\colon\hspace{-2,5mm}=}
\def\ds{\displaystyle}
\definecolor{teagreen}{rgb}{0.82, 0.94, 0.75}
\definecolor{cambridgeblue}{rgb}{0.64, 0.76, 0.68}
\definecolor{cream}{rgb}{1.0, 0.99, 0.82}
\definecolor{ivory}{rgb}{1.0, 1.0, 0.94}
\def\red{\color{red}}
\def\blue{\color{blue}}
\def\black{\color{black}}
\def\s{\sigma}
\def\xa{(\underline{X},\underline{A})}
\numberwithin{equation}{section}
\title[Symmetric and polyhedral products]{Symmetric Products and a Cartan-type formula for polyhedral products}


\skp{0.2}

\author[A.~Bahri]{A.~Bahri}
\address{Department of Mathematics,
Rider University, Lawrenceville, NJ 08648, U.S.A.}
\email{bahri@rider.edu}

\author[M.~Bendersky]{M.~Bendersky}
\address{Department of Mathematics
CUNY,  East 695 Park Avenue New York, NY 10065, U.S.A.}
\email{mbenders@hunter.cuny.edu}

\author[F.~R.~Cohen]{F.~R.~Cohen}
\address{}

\author[S.~Gitler]{S.~Gitler}
\address{}

\subjclass[2010]{Primary:  52B11, 55N10, 14M25, 55U10, 13F55, \/ 
\newline Secondary: 14F45, 55T10}

\keywords{symmetric product, polyhedral product, cohomology, polyhedral smash product}

\begin{abstract}
We give a geometric method for determining the cohomology groups  of
a polyhedral product $Z\big(K;(\underline{X}, \underline{A})\big)$, under suitable 
freeness conditions  or with coefficients taken in a  field $k$. This is done by considering first the special case 
where {  the pair} $(X_{i},A_{i }) = (B_{i}\vee C_{i}, B_{i}\vee E_{i})$ for all $i$, and $E_{i}\hookrightarrow C_{i} $ is a 
null homotopic inclusion. We derive a decomposition for these polyhedral products which resembles 
a Cartan  formula. The theory of symmetric products is used then  to generalize the  result 
 to arbitrary  
polyhedral products $Z\big(K;(\underline{X}, \underline{A})\big)$.
This leads to a direct computation of the Hilbert-Poincar\'e series for $Z\big(K;(\underline{X}, \underline{A})\big)$
and to other applications. 
\end{abstract}
\maketitle
\tableofcontents
\section{Introduction}\label{sec:introduction}
Our purpose is to  recall some standard properties of infinite symmetric products, known also as the 
Dold-Thom construction \cite{doldthom}, and to develop some related maps which are defined for polyhedral products. 
The main feature is that topological maps on the level of infinite symmetric products applied to polyhedral products 
can be defined directly from homological information. 

Polyhedral products $Z\big(K;(\underline{X}, \underline{A})\big)$, \cite{bbcg1},  are defined
for a simplicial complex $K$ on the vertex set $[m] = \{1,2,\ldots,m\}$, and a family of pointed CW pairs
$$(\underline{X}, \underline{A}) = \big\{(X_{i},A_{i}): i =1,2,\ldots,m\big\}.$$

{\let\thefootnote\relax\footnote{\hspace{-4.5mm}This work was supported in part by grant 426160 from Simons Foundation. 
The authors are grateful to the Fields Institute for a conducive environment during the 
Thematic Program: Toric Topology and Polyhedral Products. \mbox{The first author acknowledges Rider 
University for a Spring 2020 research leave.}}}

\nd They are natural subspaces of the Cartesian product $X_{1}\times X_{2}\times\cdots\times X_{m}$, in
such a way that if $K = \Delta^{m-1}$, the $(m-1)$-simplex, then 
$$Z\big(K;(\underline{X}, \underline{A})\big) = X_{1}\times X_{2}\times\cdots\times X_{m}.$$
More specifically, we consider $K$ to be a category
where the objects are the simplices of $K$ and the morphisms $d_{\sigma,\tau}$ are the inclusions 
$\sigma \subset \tau$.  A  polyhedral product is given as the colimit of a diagram 
$D_{(\underline{X}, \underline{A})}: K \to CW_{\ast}$, where at each $\sigma \in K$, we set 
\begin{equation}\label{eqn:d.sigma}
D_{(\underline{X}, \underline{A})}(\sigma) =\lprod^m_{i=1}W_i,\quad {\rm where}\quad
W_i=\left\{\begin{array}{lcl}
X_i  &{\rm if} & i\in \sigma\\
A_i &{\rm if} & i\in [m]-\sigma.
\end{array}\right.
\end{equation}

\nd  Here, the colimit is a union given by 
$$Z(K; (\underline{X}, \underline{A})) = \mcup_{\sigma \in K}D_{(\underline{X}, \underline{A})}(\sigma),$$
 but the full colimit structure is used heavily in  the development of the elementary theory.
Notice that when $\sigma \subset \tau$  then 
$D_{(\underline{X}, \underline{A})}(\sigma) \subseteq D_{(\underline{X}, \underline{A})}(\tau)$. 
In the case that $K$ itself is a simplex, 
$$Z(K; (\underline{X}, \underline{A}))\; =\; \lprod\limits_{i=1}^{m}{X_i}.$$ 

Polyhedral products were formulated first for the case $(X_i,A_i) = (D^2,S^1)$  by V.~Buchstaber
and T.~Panov in \cite{bp3}; they called their spaces {\em moment-angle complexes\/}.

\skp{0.2}
In a way entirely similar to that above, a related space $\widehat{Z}(K; (\underline{X}, \underline{A}))$, called the 
{\em polyhedral  smash product\/},  is defined by replacing the Cartesian product everywhere above by the 
smash product. That is,
$$\widehat{D}_{(\underline{X}, \underline{A})}(\sigma) =\mwedge ^m_{i=1}W_i \quad {\rm and} \quad
\widehat{Z}(K; (\underline{X}, \underline{A})) = 
\mcup_{\sigma \in K}\widehat{D}_{(\underline{X}, \underline{A})}(\sigma)$$
\nd  with
$$\widehat{Z}(K; (\underline{X}, \underline{A})) \; \subseteq \; \mwedge\limits_{i=1}^{m}{X_i}.$$
The polyhedral smash product is related to the polyhedral product by the stable decomposition 
discussed  in \cite{bbcg1} and \cite{bbcg2}. We denote by $(\underline{X}, \underline{A})_J$
the restricted family of CW-pairs $\big\{(X_j,A_j\big)\}_{j\in J}$, and by $K_J$, the full subcomplex
on $J \subset [m]$. 
\begin{thm}\cite[Theorem 2.10]{bbcg2}\label{thm:bbcgsplitting}
Let $K$ be an abstract simplicial complex  on vertices $[m]$.  Given a family $\{(X_j , A_j)\}_{j=1}^m$ of
pointed  pairs  of CW-complexes, there is a natural pointed homotopy equivalence 
\begin{equation}\label{eqn:splitting}
H\colon \Sigma{\big(Z\big(K;(\underline{X},\underline{A})\big)\big)} \longrightarrow 
\Sigma\big(\mvee_{J\subseteq [m]}\widehat{Z}\big(K_J; (\underline{X}, \underline{A})_J\big)\big).
\end{equation}
\end{thm}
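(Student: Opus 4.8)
The plan is to realize both sides of \eqref{eqn:splitting} as colimits over the face category of $K$ and then to apply, diagramwise and naturally, the classical stable splitting of a finite Cartesian product. Recall that for pointed CW-complexes $Y_{1},\dots,Y_{m}$ there is a homotopy equivalence
\[
s\colon\ \Sigma\big(Y_{1}\times\cdots\times Y_{m}\big)\ \xrightarrow{\ \simeq\ }\ \mvee_{\emptyset\neq J\subseteq[m]}\Sigma\big(\mwedge_{j\in J}Y_{j}\big),
\]
which one may take to be the composite of the iterated co-$H$ comultiplication $\nu\colon\Sigma(Y_{1}\times\cdots\times Y_{m})\to\mvee_{\emptyset\neq J}\Sigma(Y_{1}\times\cdots\times Y_{m})$ with $\mvee_{J}\Sigma q_{J}$, where $q_{J}\colon Y_{1}\times\cdots\times Y_{m}\to\mwedge_{j\in J}Y_{j}$ first projects onto the coordinates in $J$ and then collapses the fat wedge. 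Both $\nu$ and each $q_{J}$ are \emph{strictly} natural in every coordinate $Y_{i}$ for pointed maps, hence so is $s$; that $s$ is an equivalence is the standard computation, obtained by induction on $m$ from the case $\Sigma(Y\times Y')\simeq\Sigma Y\vee\Sigma Y'\vee\Sigma(Y\wedge Y')$.

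First I would record the colimit descriptions. By definition $Z(K;\xa)=\operatorname*{colim}_{\sigma\in K}D_{\xa}(\sigma)$, and since reduced suspension commutes with colimits, $\Sigma Z(K;\xa)=\operatorname*{colim}_{\sigma\in K}\Sigma D_{\xa}(\sigma)$. Each $D_{\xa}(\sigma)=\prod_{i=1}^{m}W_{i}$ is a product, and, reading off \eqref{eqn:d.sigma}, a structure map $d_{\sigma,\tau}$ of this diagram is in coordinate $i$ the identity when $i\in\sigma$ or $i\notin\tau$, and the inclusion $A_{i}\hookrightarrow X_{i}$ when $i\in\tau\setminus\sigma$. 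Applying $s$ with $Y_{i}=W_{i}$ yields homotopy equivalences
\[
H_{\sigma}\colon\ \Sigma D_{\xa}(\sigma)\ \xrightarrow{\ \simeq\ }\ \mvee_{\emptyset\neq J\subseteq[m]}\Sigma\big(\mwedge_{j\in J}W_{j}\big),
\]
and strict naturality of $s$ forces the $H_{\sigma}$ to commute with the maps $d_{\sigma,\tau}$: on the $J$-summand the induced map is the smash over $j\in J$ of the coordinate maps of $d_{\sigma,\tau}$. Thus $\{H_{\sigma}\}$ is a morphism of $K$-diagrams from $\sigma\mapsto\Sigma D_{\xa}(\sigma)$ to $\sigma\mapsto\mvee_{J}\Sigma\big(\mwedge_{j\in J}W_{j}\big)$.

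Next I would pass to colimits. A finite wedge and the suspension both commute with colimits over $K$, so the colimit of the second diagram equals $\mvee_{\emptyset\neq J\subseteq[m]}\Sigma\big(\operatorname*{colim}_{\sigma\in K}\mwedge_{j\in J}W_{j}\big)$. For a fixed $J$ the space $\mwedge_{j\in J}W_{j}$ depends only on $\sigma\cap J$, and as $\sigma$ ranges over $K$ the sets $\sigma\cap J$ range over all of $K_{J}$; comparing the two union descriptions then gives $\operatorname*{colim}_{\sigma\in K}\mwedge_{j\in J}W_{j}=\widehat Z\big(K_{J};\xa_{J}\big)$. Taking the colimit of $\{H_{\sigma}\}$ therefore produces a pointed map
\[
H\colon\ \Sigma Z\big(K;\xa\big)\ \longrightarrow\ \Sigma\Big(\mvee_{\emptyset\neq J\subseteq[m]}\widehat Z\big(K_{J};\xa_{J}\big)\Big),
\]
natural in $\xa$ (inherited from that of $s$), and matching \eqref{eqn:splitting} once the trivial summand $J=\emptyset$ is added back.

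The subtle point is that a colimit of diagramwise homotopy equivalences need not be a homotopy equivalence; here it is, because every structure map in the two $K$-diagrams is a closed cofibration — each is assembled from the CW-cofibrations $A_{i}\hookrightarrow X_{i}$ and from collapse maps of fat wedges, both of which preserve cofibrancy — so the colimits over the poset $K$ compute homotopy colimits and a levelwise equivalence of diagrams of cofibrations descends to the colimit. Equivalently, and more concretely, one may induct on the simplices of $K$, adjoining one maximal face $\sigma$ to a subcomplex $K'$ at a time: this exhibits $Z(K;\xa)$ as a pushout of $Z(K';\xa)$ along a cofibration, $\Sigma$ converts it into a homotopy pushout, and $s$ together with the inductive hypothesis and the gluing lemma completes the step. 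The other thing to be careful about is that one must use a model of $s$ that is natural \emph{on the nose} rather than merely up to homotopy, for otherwise the squares involving the $d_{\sigma,\tau}$ only commute up to homotopy and no colimit map is defined; the smash-projection model above has exactly this property, which is why it is the right tool for descending to the polyhedral colimit.
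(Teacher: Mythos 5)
This theorem is quoted from \cite[Theorem 2.10]{bbcg2} and the present paper gives no proof of it, but your argument is essentially the standard one from that reference: apply the strictly natural James--Milnor splitting $\Sigma(Y_1\times\cdots\times Y_m)\simeq\bigvee_J\Sigma(\widehat{Y}^J)$ levelwise to the diagram $D_{(\underline{X},\underline{A})}$, identify the $J$-summands of the colimit with $\widehat{Z}(K_J;(\underline{X},\underline{A})_J)$, and pass to colimits via the cofibration/gluing-lemma argument. Your treatment is correct, and you rightly flag the two genuine subtleties (strict naturality of the splitting map, and the colimit-versus-homotopy-colimit comparison), so nothing further is needed.
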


In many of the most important cases, the spaces $\widehat{Z}\big(K_J; (\underline{X}, \underline{A})_J\big)\big)$
can be identified explicitly, \cite{bbcg2}.
Aside from the various unstable and stable splitting theorems, \cite{bbcg1,gt,gtshifted,ikshifted,ik5},
there is an extensive history of  computations of the cohomology 
groups and rings of various families of polyhedral products,  
\cite[Sections 5, 8 and 11]{bbcgsurvey}, see also \cite{ldm1,franz,bbp,zheng,zheng2,bbcg10,cai1,caichoi}. 

Some very early calculations of the cohomology of certain moment-angle complexes, (the case 
$(X_i,A_i) = (D^2,S^1)$ for all $i = 1,2,\ldots,m$), appeared in the work of Santiago L\'opez de Medrano 
\cite{ldm1}, though at that time the spaces he studied were
not recognized to have the structure of a moment-angle complex. 
The cohomology algebras of all moment-angle complexes was computed first by M.~Franz \cite{franz} and by
 I.~Baskakov, V.~Buchstaber and T.~Panov in \cite{bbp}.

The cohomology of the polyhedral product  $Z\big(K;  (\underline{X}, \underline{A}) \big)$,  for 
$(\underline{X},\underline{A})$, satisfying certain freeness conditions, (coefficients in a field $k$ for example),  
was computed using 
a spectral sequence  by the authors in \cite{bbcg10}. A computation using different methods by Q.~Zheng can 
be found in \cite{zheng, zheng2}. 

The special family of CW pairs $(\underline{U}, \underline{V}) = (\underline{B\vee C}, \underline{B\vee E})$
satisfying  the condition  that  for all $i$, $(U_{i},V_{i }) = (B_{i}\vee C_{i}, B_{i}\vee E_{i})$, where 
$E_{i}\hookrightarrow C_{i} $ is a
null homotopic inclusion, is called {\em wedge decomposable\/}.
As announced in \cite[Section 12]{bbcgsurvey},  one goal of the current paper is to show that for wedge 
decomposable pairs 
$ (\underline{U}, \underline{V})$,  the algebraic decomposition given by 
the spectral sequence calculation \cite[Theorem $5.4$]{bbcg10}  is a consequence of an underlying geometric splitting. 
{\em Moreover, the results of this observation extend to general based CW-pairs of finite type.}

This paper is partly a revised version of the authors' unpublished preprint from 2014, which in turn originated from an 
earlier preprint from 2010.   In addition, the results of this paper  have been extended to describe the product 
structure in the cohomology and these will appear separately.
 
We begin in Section \ref{sec:wdpairs} by deriving for wedge decomposable pairs
$(\underline{U},\underline{V})$
an explicit decomposition of the polyhedral product into a wedge of much simpler spaces, (Theorem \ref{thm:main}
and Corollary \ref{cor:wedge}). In particular, this allows us to identify explicit additive generators for 
$H^{\ast}\big( Z\big(K;  (\underline{U}, \underline{V})\big)$.  The proof in Section
\ref{sec:thmmain} is an induction based on a filtration of the polyhedral product which is introduced in Section 
\ref{sec:filtration}. 

These decompositions give a direct framework for deducing, (Theorem \ref{thm:Cartan_and_general_homology}),
an analogous homological Cartan formula for 
the additive structure of the homology of $Z\big(K;(\underline{X},\underline{A})\big)$ for {\em any\/} family of pairs of finite, 
pointed, path-connected CW-complexes $(\underline{X},\underline{A})$, This is done 
by applying properties of the infinite symmetric product $SP(-)$ and
the polyhedral product. Namely, given pointed pairs of finite, path-connected CW-complexes, 
$(\underline{X},\underline{A})$, there exist 
pointed pairs of path-connected CW-complexes 
$$(\underline{U}, \underline{V}) = (\underline{B\vee C}, \underline{B\vee E})$$ 
together with a homotopy equivalence
$$SP\big(\widehat{Z}(K;(\underline{U},\underline{V}))\big) \longrightarrow
SP\big(\widehat{Z}(K;(\underline{X},\underline{A}))\big)$$
\nd Applications of the additive results  comprise Sections \ref{sec:hpseries} and \ref{sec:applications}.

\section{The polyhedral product of wedge decomposable pairs}\label{sec:wdpairs}
We begin with a definition.

\begin{defin}\label{def:wedgedecomp}
The special family of CW pairs $(\underline{U}, \underline{V}) = (\underline{B\vee C}, \underline{B\vee E})$
satisfying   $(U_{i},V_{i }) = (B_{i}\vee C_{i}, B_{i}\vee E_{i})$ for all $i$, where $E_{i}\hookrightarrow C_{i} $ is a
null homotopic inclusion, is called {\em wedge decomposable\/}.
\end{defin} 
The fact that the smash product distributes over wedges of spaces, leads to the characterization of the smash polyhedral 
product in a way which resembles a {\em Cartan formula\/}.

\begin{thm} \label{thm:main} (Cartan Formula)
Let $(\underline{U}, \underline{V}) = (\underline{B\vee C}, \underline{B\vee E})$ be a wedge decomposable
pair, then there is a homotopy equivalence
$$\widehat{Z}\big(K;(\underline{U}, \underline{V})\big) \longrightarrow 
\mvee_{I\leq [m]}\Big(\widehat{Z}\big(K_{I};(\underline{C},\underline{E})_I\big) \wedge
\widehat{Z}\big(K_{[m]- I};(\underline{B},\underline{B})_{[m]-I}\big)\Big)$$

\nd which is natural with respect to maps of decomposable pairs.  Of course, 
$$\widehat{Z}\big(K_{[m]- I};(\underline{B},\underline{B})_{[m]-I}\big) \;=\;  \mwedge_{j\; \in\; [m]-I}B_j$$
with the convention that 
$$\widehat{Z}\big(K_{\varnothing};(\underline{B},\underline{B})_{\varnothing}\big),\;
\widehat{Z}\big(K_{\varnothing};(\underline{C},\underline{E})_{\varnothing}\big)
 \;\; {\rm{and}}\;\; \widehat{Z}\big(K_{I};(\varnothing,\varnothing)_{I}\big)
 \;=\; S^0.$$
\end{thm}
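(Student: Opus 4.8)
\emph{Sketch of the proof.}\quad The plan is to produce the decomposition first on the defining diagram $\widehat{D}_{(\underline{U},\underline{V})}\colon K\to CW_\ast$ and then pass to the colimit, which here is simply the union. For $\sigma\in K$ we have
$$\widehat{D}_{(\underline{U},\underline{V})}(\sigma)\;=\;\mwedge_{i\in\sigma}(B_i\vee C_i)\,\wedge\,\mwedge_{i\notin\sigma}(B_i\vee E_i),$$
regarded as a subcomplex of $\mwedge_{i=1}^m U_i$ via the inclusions $V_i=B_i\vee E_i\hookrightarrow B_i\vee C_i=U_i$; note that only the inclusion $E_i\hookrightarrow C_i$ is needed here, its null-homotopy playing no role in this theorem. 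Since the smash product of CW-complexes distributes over wedges, $\mwedge_{i=1}^m U_i=\mvee_{I\subseteq[m]}\big(\mwedge_{i\in I}C_i\wedge\mwedge_{j\notin I}B_j\big)$, and under this homeomorphism $\widehat{D}_{(\underline{U},\underline{V})}(\sigma)=\mvee_{I\subseteq[m]}\widehat{D}^{I}(\sigma)$, where
$$\widehat{D}^{I}(\sigma)\;:=\;\widehat{D}_{(\underline{C},\underline{E})_I}(\sigma\cap I)\,\wedge\,\mwedge_{j\notin I}B_j$$
and the $I$-th summand lies inside the $I$-th summand of the ambient wedge (using $E_i\subseteq C_i$). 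The first thing I would check is that this $I$-indexed splitting is compatible with the structure maps $\widehat{D}(\sigma)\hookrightarrow\widehat{D}(\tau)$ for $\sigma\subseteq\tau$: at each vertex $i\in\tau-\sigma$ the map is $B_i\vee E_i\hookrightarrow B_i\vee C_i$, which respects the wedge splitting summand by summand, so each $\widehat{D}^{I}$ is itself a subdiagram $K\to CW_\ast$ and $\widehat{D}_{(\underline{U},\underline{V})}=\mvee_{I}\widehat{D}^{I}$ over $K$.

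Next I would take unions over $K$. Because all structure maps are inclusions and distinct ambient wedge summands meet only at the basepoint, $\widehat{Z}(K;(\underline{U},\underline{V}))=\bigcup_{\sigma\in K}\widehat{D}_{(\underline{U},\underline{V})}(\sigma)=\mvee_{I\subseteq[m]}\big(\bigcup_{\sigma\in K}\widehat{D}^{I}(\sigma)\big)$. For a fixed $I$ the factor $\mwedge_{j\notin I}B_j$ is independent of $\sigma$, so $\bigcup_\sigma\widehat{D}^{I}(\sigma)=\big(\bigcup_{\sigma\in K}\widehat{D}_{(\underline{C},\underline{E})_I}(\sigma\cap I)\big)\wedge\mwedge_{j\notin I}B_j$; and since $K$ is closed under passing to faces, $\{\sigma\cap I:\sigma\in K\}$ is precisely the set of simplices of the full subcomplex $K_I$, whence $\bigcup_{\sigma\in K}\widehat{D}_{(\underline{C},\underline{E})_I}(\sigma\cap I)=\bigcup_{\tau\in K_I}\widehat{D}_{(\underline{C},\underline{E})_I}(\tau)=\widehat{Z}(K_I;(\underline{C},\underline{E})_I)$. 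Assembling the pieces gives the asserted wedge, with $\widehat{Z}(K_{[m]-I};(\underline{B},\underline{B})_{[m]-I})=\mwedge_{j\in[m]-I}B_j$ since every pair $(B_j,B_j)$ is constant, and with the cases $I=\varnothing$, $I=[m]$ and degenerate restrictions absorbed into the stated $S^0$-conventions.

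Finally, a morphism of wedge decomposable pairs amounts to maps $B_i\to B_i'$ and $C_i\to C_i'$ carrying $E_i$ into $E_i'$, and each step above — the distributivity homeomorphism, the passage to the subdiagrams $\widehat{D}^{I}$, and the identification of $\bigcup_\sigma\widehat{D}^{I}(\sigma)$ — is natural in this data, so the resulting map is natural; since each step is a homeomorphism of CW-complexes one in fact obtains a natural homeomorphism, hence a fortiori the required homotopy equivalence. The one point that is not purely formal, and which I expect to be the crux, is precisely the compatibility of the $I$-indexed wedge splittings with the structure maps of the diagram — this is what licenses splitting \emph{before} taking the colimit — together with the elementary but essential observation that intersecting the simplices of $K$ with $I$ recovers exactly $K_I$; once these are settled the remainder is bookkeeping.
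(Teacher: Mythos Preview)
Your argument is correct and in fact yields a natural homeomorphism, as you observe. The paper's proof rests on the same distributivity of smash over wedge but is organized differently: rather than decomposing the whole diagram $\widehat{D}_{(\underline{U},\underline{V})}$ at once and then passing to the colimit, the paper filters $K$ by the length-lexicographic order on simplices (Section~\ref{sec:filtration}) and proves by induction on $t$ that
\[
F_t\widehat{Z}(K;(\underline{U},\underline{V}))\;\simeq\;\mvee_{I\leq[m]}\widehat{Z}(F_tK_I;(\underline{C},\underline{E})_I)\wedge\widehat{Z}(K_{[m]-I};(\underline{B},\underline{B})_{[m]-I}),
\]
the inductive step amounting to expanding the single new piece $\widehat{D}_{(\underline{U},\underline{V})}(\sigma_t)$ via distributivity and checking that it glues correctly onto the previous filtration stage. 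Your route is more direct and sidesteps the filtration entirely; the paper's inductive route makes the gluing explicit one simplex at a time but is otherwise no more informative, and indeed your compatibility check for the structure maps $\widehat{D}(\sigma)\hookrightarrow\widehat{D}(\tau)$ is exactly what drives their induction. Both proofs make no use of the null-homotopy of $E_i\hookrightarrow C_i$, a point you rightly flag; that hypothesis enters only in the subsequent Wedge Lemma (Theorem~\ref{thm:wlemma}) used to further decompose the factors $\widehat{Z}(K_I;(\underline{C},\underline{E})_I)$.
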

\skp{0.3}
We can decompose $\widehat{Z}\big(K;(\underline{U}, \underline{V})\big)$ further by applying
(a generalization of) the Wedge Lemma. We recall first the definition of a link.
\begin{defin}\label{defn:link}
For $\sigma$  a simplex in a simplicial complex $\mathcal{K}$, $\text{lk}_{\sigma}(\mathcal{K})$  
{\em the link of\/} $\sigma$ {\em in\/} $\mathcal{K}$, is defined to be the simplicial complex  for which
$$ \tau \in \text{lk}_{\sigma}(K)\quad \text{if and only if}\quad \tau \cup \sigma \in \mathcal{K}.$$
\end{defin} 
\begin{thm}\label{thm:wlemma} \cite[Theorem $2.12$]{bbcg1}, \cite[Lemma 1.8]{zz}
Let $\mathcal{K}$ be a simplicial complex on $[m]$ and $(\underline{C}, \underline{E})$ a family of CW pairs
satisfying $E_i \hookrightarrow C_i$ is null homotopic for all $i$ then
$$\widehat{Z}(\mathcal{K}; (\underline{C}, \underline{E})) \simeq \mvee_{\sigma \in \mathcal{K}}
|\Delta(\overline{\mathcal{K}})_{<\sigma}|\ast
\widehat{D}_{\underline{C},\underline{E}}^{[m]}(\sigma)$$
\nd where $|\Delta(\overline{\mathcal{K}})_{<\sigma}| \cong |\text{lk}_{\sigma}(\mathcal{K})|$, the 
realization of the link of $\sigma$ in the
simplicial complex $\mathcal{K}$  and  
\begin{equation}\label{eqn:ced.sigma}
\widehat{D}_{\underline{C},\underline{E}}^{[m]}(\sigma)  =\mwedge^{m}_{j=1}W_{i_{j}},\quad {\rm with}\quad
W_{i_{j}}=\left\{\begin{array}{lcl}
C_{i_{j}}  &{\rm if} & i_{j}\in \sigma\\
E_{i_{j}}  &{\rm if} & i_{j}\in [m]-\sigma.
\end{array}\right.
\end{equation} \hfill $\square$
\end{thm}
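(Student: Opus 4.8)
The plan is to realize the polyhedral smash product as a homotopy colimit over the face poset of $\mathcal{K}$ and then feed it into the homotopy-colimit form of the Wedge Lemma. First I would recall that, by definition, $\widehat{Z}(\mathcal{K};(\underline{C},\underline{E}))$ is the colimit over the face category of $\mathcal{K}$ of the diagram $\sigma\mapsto\widehat{D}_{\underline{C},\underline{E}}^{[m]}(\sigma)$ of (\ref{eqn:ced.sigma}), whose structure maps are the evident inclusions of based CW subcomplexes. For each $\sigma$ the latching object $\bigcup_{\tau\subsetneq\sigma}\widehat{D}_{\underline{C},\underline{E}}^{[m]}(\tau)$ includes into $\widehat{D}_{\underline{C},\underline{E}}^{[m]}(\sigma)$ as a subcomplex, hence as a based cofibration, so the diagram is (projectively) cofibrant and the Bousfield--Kan map $\operatorname{hocolim}_{\sigma}\widehat{D}_{\underline{C},\underline{E}}^{[m]}(\sigma)\to\operatorname{colim}_{\sigma}\widehat{D}_{\underline{C},\underline{E}}^{[m]}(\sigma)=\widehat{Z}(\mathcal{K};(\underline{C},\underline{E}))$ is a homotopy equivalence.

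The conceptual heart is the second step: the null-homotopy hypothesis on the pairs $(C_i,E_i)$ forces every non-identity arrow of this diagram to be null homotopic. Indeed, for $\sigma\subsetneq\tau$ the map $\widehat{D}_{\underline{C},\underline{E}}^{[m]}(\sigma)\to\widehat{D}_{\underline{C},\underline{E}}^{[m]}(\tau)$ is a smash product of identity maps on the factors indexed by $\sigma$ and by $[m]-\tau$ with the inclusions $E_{i_j}\hookrightarrow C_{i_j}$ on the nonempty set of factors indexed by $\tau-\sigma$, and smashing a based null homotopy of one such inclusion with the identities on the remaining factors gives a based null homotopy of the whole map. With this established I would invoke the Wedge Lemma for homotopy colimits of based diagrams over a poset all of whose non-identity morphisms are null homotopic, in the form of \cite[Lemma 1.8]{zz} (see also \cite[Theorem 2.12]{bbcg1}), obtaining
\[
\widehat{Z}(\mathcal{K};(\underline{C},\underline{E}))\;\simeq\;\mvee_{\sigma\in\mathcal{K}}\big|\Delta\big((\overline{\mathcal{K}})_{<\sigma}\big)\big|\ast\widehat{D}_{\underline{C},\underline{E}}^{[m]}(\sigma),
\]
where $\overline{\mathcal{K}}$ is the face poset of $\mathcal{K}$ ordered by reverse inclusion, so $(\overline{\mathcal{K}})_{<\sigma}=\{\tau\in\mathcal{K}:\tau\supsetneq\sigma\}$, and $\ast$ is the based join with the usual convention that the join with an empty poset is the identity.

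Finally I would identify the combinatorial factor. The order-preserving bijection $\tau\mapsto\tau-\sigma$ carries $\{\tau\in\mathcal{K}:\tau\supsetneq\sigma\}$, ordered by inclusion, onto the poset of nonempty faces of $\text{lk}_\sigma(\mathcal{K})$; hence $\Delta\big((\overline{\mathcal{K}})_{<\sigma}\big)$ is the barycentric subdivision of $\text{lk}_\sigma(\mathcal{K})$ and $\big|\Delta\big((\overline{\mathcal{K}})_{<\sigma}\big)\big|\cong|\text{lk}_\sigma(\mathcal{K})|$, the asserted identification. As a sanity check, for $\mathcal{K}=\Delta^{m-1}$ every summand with $\sigma\subsetneq[m]$ has a contractible join factor (the link is a simplex) while the term for $\sigma=[m]$ is $C_1\wedge\cdots\wedge C_m$, recovering the expected answer.

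The step I expect to be the main obstacle is the first one: verifying in sufficient generality that the polyhedral smash diagram is cofibrant over the face category, so that the colimit computes the homotopy colimit, and---relatedly---tracking basepoints consistently when the unbased complex $\big|\Delta\big((\overline{\mathcal{K}})_{<\sigma}\big)\big|$ is joined with the based space $\widehat{D}_{\underline{C},\underline{E}}^{[m]}(\sigma)$, including the degenerate cases $\sigma=\varnothing$ and $\sigma$ maximal. Granting the cited homotopy-colimit Wedge Lemma in the based setting, the remainder is routine bookkeeping.
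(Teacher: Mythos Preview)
The paper does not actually prove this statement: Theorem~\ref{thm:wlemma} is stated with citations to \cite[Theorem~2.12]{bbcg1} and \cite[Lemma~1.8]{zz} and closed with a $\square$, meaning it is imported wholesale from those references rather than argued here. Your sketch is therefore not competing with any argument in the paper; rather, you have essentially reconstructed the proof that lives in the cited sources. The three ingredients you isolate --- cofibrancy of the smash diagram so that the colimit agrees with the homotopy colimit, null-homotopy of every non-identity structure map coming from smashing with at least one $E_i\hookrightarrow C_i$, and the Wedge Lemma of Ziegler--\v{Z}ivaljevi\'c followed by the bijection $\tau\mapsto\tau\setminus\sigma$ identifying the order complex with the link --- are exactly the steps underlying \cite[Theorem~2.12]{bbcg1}, and your account of them is correct. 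Your caveat about basepoints and the degenerate join cases is well placed but is handled in the references; for the purposes of this paper nothing more is needed than the citation.
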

Applying this to the decomposition of Theorem \ref{thm:main}, we get

\begin{cor}\label{cor:wedge} There is a homotopy equivalence
$$\widehat{Z}\big(K;(\underline{U}, \underline{V})\big) \longrightarrow 
\mvee_{I\leq [m]}\Big(\big(\mvee_{\sigma \in K_{I}} |lk_{\sigma}(K_{I})|\ast
\widehat{D}_{\underline{C},\underline{E}}^{I}(\sigma)\big) \wedge
\widehat{Z}\big(K_{[m]- I};(\underline{B},\underline{B})_{[m]-I}\big)\Big).$$

\nd where $\widehat{D}_{\underline{C},\underline{E}}^{I}(\sigma)$ is as in \eqref{eqn:ced.sigma} with
$I$ replacing $[m]$.
\end{cor}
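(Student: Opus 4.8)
The plan is to obtain Corollary~\ref{cor:wedge} by substituting the Wedge Lemma, Theorem~\ref{thm:wlemma}, into the Cartan-type decomposition of Theorem~\ref{thm:main}. Theorem~\ref{thm:main} already presents $\widehat{Z}\big(K;(\underline{U},\underline{V})\big)$ as a wedge, over subsets $I\subseteq[m]$, of smash products
$$\widehat{Z}\big(K_{I};(\underline{C},\underline{E})_I\big)\wedge
\widehat{Z}\big(K_{[m]- I};(\underline{B},\underline{B})_{[m]-I}\big),$$
so it is enough to rewrite the first factor of each summand and then reorganize.

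First I would check that for every $I\subseteq[m]$ the restricted family $(\underline{C},\underline{E})_I=\{(C_j,E_j)\}_{j\in I}$ still satisfies the hypothesis of Theorem~\ref{thm:wlemma}: the inclusion $E_j\hookrightarrow C_j$ is null homotopic for each $j$ by the definition of a wedge decomposable pair, and this property restricts trivially to subfamilies. Applying Theorem~\ref{thm:wlemma} with $\mathcal{K}=K_I$ then yields a homotopy equivalence
$$\widehat{Z}\big(K_{I};(\underline{C},\underline{E})_I\big)\;\simeq\;
\mvee_{\sigma\in K_I}|lk_{\sigma}(K_I)|\ast\widehat{D}^{I}_{\underline{C},\underline{E}}(\sigma),$$
using the identification $|\Delta(\overline{K_I})_{<\sigma}|\cong|lk_{\sigma}(K_I)|$ recorded there, with $\widehat{D}^{I}_{\underline{C},\underline{E}}(\sigma)$ defined as in \eqref{eqn:ced.sigma} but with $I$ in place of $[m]$. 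Each such wedge is finite because $K_I$ has finitely many simplices.

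Next I would substitute this equivalence into the $I$-th summand of Theorem~\ref{thm:main} and invoke the distributivity of the smash product over finite wedges of well-pointed spaces. Writing $W_I=\widehat{Z}\big(K_{[m]- I};(\underline{B},\underline{B})_{[m]-I}\big)=\mwedge_{j\in[m]-I}B_j$, this gives
$$\Big(\mvee_{\sigma\in K_I}|lk_{\sigma}(K_I)|\ast\widehat{D}^{I}_{\underline{C},\underline{E}}(\sigma)\Big)\wedge W_I
\;\simeq\;\mvee_{\sigma\in K_I}\Big(\big(|lk_{\sigma}(K_I)|\ast\widehat{D}^{I}_{\underline{C},\underline{E}}(\sigma)\big)\wedge W_I\Big).$$
All spaces occurring are finite pointed CW complexes, so the relevant cofibrancy is automatic and both $-\wedge W_I$ and the formation of finite wedges preserve the homotopy equivalence of the previous step. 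Composing the equivalence of Theorem~\ref{thm:main} with the wedge over $I$ of these rewritings — each ingredient being a genuine homotopy equivalence, so the direction is immaterial — produces precisely the map claimed in Corollary~\ref{cor:wedge}. The empty-set conventions are inherited verbatim from those already fixed in Theorems~\ref{thm:main} and~\ref{thm:wlemma}.

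There is no genuine geometric obstacle: all of the content lies in Theorems~\ref{thm:main} and~\ref{thm:wlemma}, and what remains is bookkeeping. The one place a reader should be careful is exactly that last assembly step — one must know (or cite) that smashing with a fixed well-pointed complex and forming finite wedges are homotopy-invariant operations on pointed CW complexes, so that the local Wedge Lemma equivalences, one for each $I$, glue to a single global equivalence. Finally, naturality of the resulting map with respect to maps of wedge decomposable pairs follows from the naturality asserted in Theorem~\ref{thm:main} together with the naturality in $(\underline{C},\underline{E})$ of the Wedge Lemma decomposition.
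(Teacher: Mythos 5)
Your proposal is correct and follows exactly the route the paper takes: the corollary is obtained by applying Theorem \ref{thm:wlemma} to each factor $\widehat{Z}\big(K_{I};(\underline{C},\underline{E})_I\big)$ in the decomposition of Theorem \ref{thm:main}, the hypothesis that each $E_j\hookrightarrow C_j$ is null homotopic being inherited by every restricted family $(\underline{C},\underline{E})_I$. The paper leaves the substitution and the homotopy-invariance bookkeeping implicit, so your write-up is simply a more explicit version of the same argument.
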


\nd Combined with Theorem \ref{thm:bbcgsplitting}, this gives a complete
description of the topological spaces $Z\big(K;(\underline{U}, \underline{V})\big)$ for wedge decomposable pairs
$(\underline{U}, \underline{V})\big)$. 
\skp{0.1}
The case $E_i \simeq \ast$ simplifies further by \cite[Theorem $2.15$]{bbcg2} to give the next corollary.
\begin{cor}\label{cor:E.a.point}
For wedge decomposable pairs of the form $(\underline{B\vee C}, \underline{B})$, corresponding to $E_i \simeq \ast$
for all $i = 1,2,\ldots,m$, there are homotopy equivalences
$$\widehat{Z}\big(K_{I};(\underline{C},\underline{E})_I\big) \simeq \widehat{Z}\big(K_{I};(\underline{C},\ast)_I\big)
\simeq  \widehat{C}^I,$$
and so Theorem \ref{thm:main} gives $\widehat{Z}\big(K;(\underline{B\vee C}, \underline{B})\big) \simeq
\ds{\mvee_{I\leq [m]}}\big(\widehat{C}^I\wedge \widehat{B}^{([m]-I)}\big)$. \hfill $\square$
\end{cor}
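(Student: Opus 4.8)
The plan is to specialise Theorem~\ref{thm:main} to the case $E_i\simeq\ast$ and then simplify each wedge summand by means of \cite[Theorem~2.15]{bbcg2}; there is no deep point here, only the usual homotopy invariance of the smash polyhedral product together with a little bookkeeping.

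First I would establish the equivalence $\widehat{Z}\big(K_I;(\underline{C},\underline{E})_I\big)\simeq\widehat{Z}\big(K_I;(\underline{C},\ast)_I\big)$. Each $E_i$ is contractible and contains the basepoint, so the inclusion $\{\ast\}\hookrightarrow E_i$ is a (pointed) homotopy equivalence, whence $(C_i,\{\ast\})\hookrightarrow(C_i,E_i)$ is a strict map of CW pairs which is a homotopy equivalence on total spaces and on subspaces. Since $\widehat{Z}(K_I;-)$ carries such pairwise homotopy equivalences of CW pairs to homotopy equivalences, this gives
$$\widehat{Z}\big(K_I;(\underline{C},\ast)_I\big)\ \xrightarrow{\simeq}\ \widehat{Z}\big(K_I;(\underline{C},\underline{E})_I\big),$$
which is an identity when $E_i=\ast$ on the nose.

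Next I would identify $\widehat{Z}\big(K_I;(\underline{C},\ast)_I\big)$. In the colimit defining this space the piece attached to a face $\sigma\in K_I$ is $\mwedge_{i\in I}W_i$ with $W_i=C_i$ for $i\in\sigma$ and $W_i=\ast$ for $i\in I-\sigma$; a smash product containing a basepoint factor is a point, so every such piece collapses to a point except when $\sigma$ is the full vertex set $I$, which is possible only if $I$ is a face of $K$. In that case the colimit reduces to the single space $\mwedge_{i\in I}C_i=\widehat{C}^I$, while for $I\notin K$ the whole polyhedral smash product is contractible. This is precisely \cite[Theorem~2.15]{bbcg2}. Composing with the previous display yields the asserted chain $\widehat{Z}\big(K_I;(\underline{C},\underline{E})_I\big)\simeq\widehat{Z}\big(K_I;(\underline{C},\ast)_I\big)\simeq\widehat{C}^I$.

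Finally I would feed this back into Theorem~\ref{thm:main}: for each $I\leq[m]$ the remaining factor $\widehat{Z}\big(K_{[m]-I};(\underline{B},\underline{B})_{[m]-I}\big)$ equals $\mwedge_{j\in[m]-I}B_j=\widehat{B}^{([m]-I)}$, with the $S^0$ conventions of Theorem~\ref{thm:main} in the extreme cases $I=[m]$ and $I=\varnothing$. Since the equivalence of Theorem~\ref{thm:main} is natural in the decomposable pair, wedging together the summandwise equivalences just obtained is legitimate and produces
$$\widehat{Z}\big(K;(\underline{B\vee C},\underline{B})\big)\ \simeq\ \mvee_{I\leq[m]}\big(\widehat{C}^I\wedge\widehat{B}^{([m]-I)}\big),$$
the summands indexed by non-faces $I\notin K$ being contractible and thus negligible in the wedge. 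The only genuinely delicate step is the homotopy invariance of $\widehat{Z}(K_I;-)$ under pairwise equivalences --- which is where the cofibration hypotheses on the CW pairs enter --- together with discarding those contractible summands; both are routine. As a cross-check one can instead read the corollary off Corollary~\ref{cor:wedge}: when $E_i=\ast$, every term $\widehat{D}_{\underline{C},\underline{E}}^{I}(\sigma)$ with $\sigma\neq I$ is a point, and for $\sigma=I$ a face of $K$ the remaining join factor reduces to $\mwedge_{i\in I}C_i$, recovering the same wedge decomposition.
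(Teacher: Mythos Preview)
Your proof is correct and follows exactly the route the paper indicates: invoke \cite[Theorem~2.15]{bbcg2} to identify $\widehat{Z}(K_I;(\underline{C},\ast)_I)$ and then substitute into Theorem~\ref{thm:main}. The paper gives no further argument beyond that single citation, so your write-up is strictly more detailed; in particular, your observation that the summands with $I\notin K$ collapse to a point (so that the wedge in the final formula is effectively over faces of $K$) is a useful clarification that the paper's terse statement glosses over.
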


\nd Notice here that the Poincar\'e series for the space $\widehat{Z}\big(K;(\underline{B\vee C}, \underline{B})\big)$
follows easily from Corollary \ref{cor:E.a.point}. 

\begin{rem} In comparing these observations with  \cite[Theorem $5.4$]{bbcg10}, notice that the links appear in the terms
$\widehat{Z}\big(K_{I};(\underline{C},\underline{E})_I\big)$. Also, while  Theorem \ref{thm:main} and Corollary \ref{cor:wedge} 
give a geometric underpinning for the cohomology calculation in \cite[Theorem $5.4$]{bbcg10} for wedge decomposable pairs, 
the geometric splitting does not require that $E,B$ or $C$ have torsion-free cohomology
\end{rem}

\section{A filtration}\label{sec:filtration}
We begin by reviewing the filtration on polyhedral products used for the spectral sequence calculation in \cite{bbcg10},
 following  \cite[Section 2]{bbcg10}, where more details can be found.  The  length-lexicographical 
ordering on the faces  of the $(m-1)$-simplex $\Delta[m-1]$ is induced by an ordering on the vertices.
This is the left lexicographical ordering on strings of varying lengths with shorter strings taking
precedence.
The ordering gives a filtration on $\Delta[m-1]$ by $$F_{t}(\Delta[m-1]) = \mcup_{s\leq t}\sigma_s.$$

\nd In turn, this gives a total ordering on the simplices of a simplicial $K$ on $m$ vertices
\begin{equation}\label{eqn:simplexordering}
\sigma_{0} = \varnothing< \sigma_{1}<  \sigma_{2}<\ldots <\sigma_{t}<\ldots< \sigma_{s}
\end{equation}
\nd via the natural inclusion $$K\; \subset \; \Delta[m-1].$$
This is filtration preserving in the sense that $F_{t}K = K \cap F_{t}\Delta[m-1]$. 
\begin{exm}
Consider $[m] = [3]$ and 
$$K  = \big\{\phi, \{v_1\},\{v_2\},\{v_3\}, \big\{\{v_1\},\{v_3\}\big\}, \big\{\{v_2\},\{v_3\}\big\}\big\}$$ 
with the realization consisting of two edges with a common vertex.
Here the  length-lexicographical ordering on the two-simplex $\Delta[2]$ is 
$$\phi <  v_1<v_2<v_3<v_1v_2<v_1v_3<v_2v_3<v_1v_2v_3$$
and so the induced ordering on $K$ is
$$\phi <  v_1<v_2<v_3<v_1v_3<v_2v_3\/.$$
\end{exm}
\begin{rem}
 Notice that if $t<m$, then $F_{t}K$ will contain  {\em ghost\/} vertices, that is, vertices which are in $[m]$ but are not 
considered simplices, They do however label Cartesian product factors in the polyhedral product. 
\end{rem}
As described in \cite[Section 2]{bbcg10},
this induces a natural filtration on the polyhedral product $Z\big(K;(\underline{X}, \underline{A})\big)$ and the smash 
polyhedral product  $\widehat{Z}\big(K;(\underline{X}, \underline{A})\big)$ as follows:
$$F_{t}{Z}(K;(\underline{X},\underline{A})) \;=\; \mcup_{k \leq t}D_{(\underline{X}, \underline{A})}({\sigma_k})
\quad {\rm{and}} \quad F_{t}\widehat{Z}(K;(\underline{X},\underline{A})) = \mcup_{k \leq t}
\widehat{D}_{\underline{X},\underline{A}}(\sigma_{k}).$$

\nd Notice also that the filtration satisfies
\begin{equation}\label{eqn:flitration}
F_{t}\widehat{Z}(K;(\underline{X},\underline{A})) \;=\; \widehat{Z}(F_{t}K;(\underline{X},\underline{A})).
\end{equation}
\skp{0.3}

\section{The proof of Theorem \ref{thm:main}}\label{sec:thmmain}
Let the family of CW pairs $(\underline{U}, \underline{V})$ be wedge decomposable as in 
Definition \ref{def:wedgedecomp}.
 We begin by checking that Theorem \ref{thm:main} holds for $F_{0}\widehat{Z}(K;(\underline{U},\underline{V}))$.
In this case $F_{0}K$ consists of the empty simplex, (the boundary of a point), and $m-1$ ghost vertices.
So, 
\begin{align}\label{eqn:F0}
\widehat{Z}(F_{0}K;(\underline{U},\underline{V})) \;=\; V_1 \wedge V_2 \cdots \wedge V_m
\;=\;(B_1 \vee E_1) \wedge  (B_2 \vee E_2)  \wedge \cdots \wedge (B_m \vee E_m).
\end{align}
Next, fix $I = (i_1,i_2,\ldots, i_k) \subset [m]$ and set $[m]-I = (j_i,j_2,\ldots j_{m-k})$. Then
$$\widehat{Z}\big(F_{0}K_{I};(\underline{C},\underline{E})_I\big)\wedge 
\widehat{Z}\big(K_{[m]- I};(\underline{B},\underline{B})_{[m]-I}\big)= (E_{i_1}\wedge E_{i_2}\wedge \cdots E_{i_k})\wedge
(B_{j_1}\wedge B_{j_2}\wedge \cdots B_{j_{m-k}}).$$
    \nd is the $I$-th wedge term in the expansion of the right hand side of \eqref{eqn:F0}. This confirms Theorem \ref{thm:main}
for $t=1$. 
\skp{0.3}
We suppose next the induction hypothesis that 
$$F_{t-1}\widehat{Z}(K;(\underline{U},\underline{V})) \;\simeq\;
\mvee_{I\leq [m]}\widehat{Z}\big(F_{t-1}K_{I};(\underline{C},\underline{E})_I\big)\wedge 
\widehat{Z}\big(K_{[m]- I};(\underline{B},\underline{B})_{[m]-I}\big),$$
\nd with a view to verifying it for $F_{t}$.  The definition of the filtration gives
{\fontsize{11}{12}\selectfont
\begin{align}\label{eqn:union}
F_{t}\widehat{Z}(K;(\underline{U},\underline{V})) \;&=\; 
\widehat{D}_{\underline{U},\underline{V}}(\sigma_{t}) \;\cup\; 
F_{t-1}\widehat{Z}(K;(\underline{U},\underline{V}))  \\
\nonumber &\simeq\;
\widehat{D}_{\underline{U},\underline{V}}(\sigma_{t}) \;\cup\; 
\mvee_{I\leq [m]}\widehat{Z}\big(F_{t-1}K_{I};(\underline{C},\underline{E})_I\big)\wedge 
\widehat{Z}\big(K_{[m]- I};(\underline{B},\underline{B})_{[m]-I}\big).
\end{align}}
\nd The space \;$\ds{\widehat{D}_{\underline{U},\underline{V}}(\sigma_{t})}$\; is the smash product
\begin{equation}\label{eqn:dsigma}
\mwedge^{m}_{j=1}B_{j}\vee Y_{j},\quad {\rm with}\quad
Y_{j}=\left\{\begin{array}{lcl}
C_{j}  &{\rm if} & j\in \sigma_{t}\\
E_{j}  &{\rm if} & j\notin \sigma_{t}.
\end{array}\right.
\end{equation}
\vspace{0.5mm}

\nd After a shuffle of wedge factors, the space
 $\ds{\widehat{D}_{\underline{U},\underline{V}}(\sigma_{t})}$ becomes
\begin{multline}\label{eqn:induction}
\mvee_{I\leq [m],\; \sigma_{t}\in I}
\widehat{D}_{\underline{C},\underline{E}}^{I}(\sigma_{t}) \wedge 
\widehat{Z}\big(K_{[m]- I};(\underline{B},\underline{B})_{[m]-I}\big)\;
\; \vee \\
\mvee_{I\leq [m],\; \sigma_{t}\notin I}
\widehat{Z}\big(K_{I};(\underline{C},\underline{E})_I\big) \wedge 
\widehat{Z}\big(K_{[m]- I};(\underline{B},\underline{B})_{[m]-I}\big)
\end{multline} 
\skp{0.2}
\nd where the space $\widehat{D}_{\underline{C},\underline{E}}^{I}(\sigma_{t})$ is defined by
\eqref{eqn:ced.sigma}. 
\skp{0.6}
\begin{rem}
Notice here the relevant fact that the number of subsets $I \leq [m]$ is
the same as the number of wedge summands in the expansion of \eqref{eqn:dsigma}, namely
$2^{m}$.
\end{rem}

\nd The right-hand wedge summand in \eqref{eqn:induction} is a subset of
$$\mvee_{I\leq [m]}\widehat{Z}\big(F_{t-1}K_{I};(\underline{C},\underline{E})_I\big)\wedge 
\widehat{Z}\big(K_{[m]- I};(\underline{B},\underline{B})_{[m]-I}\big)$$ 
\nd and so,
{\fontsize{9.7}{12}\selectfont
\begin{align*}
\mvee_{I\leq [m],\; \sigma_{t}\notin I}
\hspace{-2mm}\widehat{Z}\big(K_{I};(\underline{C},\underline{E})_I\big) \wedge 
\widehat{Z}\big(K_{[m]- I};(\underline{B},\underline{B})_{[m]-I}\big)\;\; &\mcup 
\mvee_{I\leq [m]}\widehat{Z}\big(F_{t-1}K_{I};(\underline{C},\underline{E})_I\big)\wedge 
\widehat{Z}\big(K_{[m]- I};(\underline{B},\underline{B})_{[m]-I}\big)\\[1mm]
&= \mvee_{I\leq [m]}\widehat{Z}\big(F_{t-1}K_{I};(\underline{C},\underline{E})_I\big)\wedge 
\widehat{Z}\big(K_{[m]- I};(\underline{B},\underline{B})_{[m]-I}\big).
\end{align*}}
Finally, for each 
${I\leq [m]}$ with ${\sigma_{t}\in I}$,  we have   

\begin{equation}
\widehat{D}_{\underline{C},\underline{E}}^{I}(\sigma_{t}) \cup 
\widehat{Z}\big(F_{t-1}K_{I};(\underline{C},\underline{E})_I\big) \;=\; 
\widehat{Z}\big(F_{t}K_{I};(\underline{C},\underline{E})_I\big).
\end{equation}
\skp{0.05}
\nd This concludes the inductive step to give
\begin{equation}\label{eqn:ftuv}
F_{t}\widehat{Z}(K;(\underline{U},\underline{V})) \;\simeq\;
\mvee_{I\leq [m]}\widehat{Z}\big(F_{t}K_{I};(\underline{C},\underline{E})_I\big)\wedge 
\widehat{Z}\big(K_{[m]- I};(\underline{B},\underline{B})_{[m]-I}\big).
\end{equation}
\nd It is straightforward to explicitly check the steps above in the case of $F_{0}$ and
$F_{1}$. This completes the proof. \hfill $\Box$

\section{Symmetric products}\label{sec:sym.prod}
We begin with a definition. 
\begin{defin} Let $(X,\ast)$ denote a pointed topological space. The $m$-{\em fold symmetric product\/} for $(X,\ast)$ 
is  the orbit space
$$SP^m(X) = X^m\big/\Sigma_m$$
where the symmetric group on $m$-letters \/$\Sigma_m$ acts on the left by permutation of coordinates.
There are natural maps
\begin{equation}\label{eqn:mape}
\begin{array}{lccc}
\bm{e}\colon& SP^m(X)& \longrightarrow &SP^{m+1}(X)\\[1.9mm]
     & [x_1, x_2, \ldots, x_m]  & \mapsto     &[x_1, x_2, \ldots, x_m,\ast]
\end{array}
\end{equation}

\nd which allow for the definition of the {\em infinite symmetric product\/} as a colimit 
$$SP (X) = \ds{{\rm co}\hspace{-0.6mm}\lim_{\hspace{-4mm}1\leq m}}\;SP^{m}(X).$$
The colimit  becomes a filtered unital commutative monoid under concatenation, with $\ast$ as
the unit. Furthermore, there is a natural inclusion
\begin{equation}\label{eqn:defofE}     
\begin{array}{lccc}
E_X\colon\!\!&  X & \longrightarrow &  SP(X)\\[0.1mm]
                     &p         & \mapsto     &[p]
\end{array}.
\end{equation}
\end{defin}

One version of the classical Dold-Thom theorem is as follows.
\begin{thm}\label{thm:doldthom}\cite{doldthom}
Given a pointed, path-connected pair of finite CW-complexes $(X,A,\ast)$
(where $A$ is a closed subcomplex of $X$), then the following hold.
\begin{enumerate}
\item $SP(X)$ is homotopy equivalent to a product of Eilenberg-Mac Lane spaces
$$\mprod_{1\leq q\leq \infty}K(H_{q}(X),q)$$

\item The natural map
$$SP(X) \longrightarrow SP(X/A)$$
is a quasi-fibration with quasi-fibre $SP(A)$.
\end{enumerate}
\end{thm}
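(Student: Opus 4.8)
The plan is to prove the two assertions in the opposite of the order stated: part (2), the quasi-fibration statement, is the genuinely substantial input, and part (1) then follows by formal arguments.

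For (2) the strategy is the one introduced by Dold and Thom: one verifies that $p\colon SP(X)\to SP(X/A)$ is a quasi-fibration by decomposing the base and invoking the three standard stability lemmas for quasi-fibrations --- (a) if $p$ is a quasi-fibration over two open sets and over their intersection, it is one over their union; (b) $p$ is a quasi-fibration over $B$ if it is so over each term of a suitable exhausting filtration of $B$; and (c) the deformation lemma, that if $p^{-1}(U)$ can be deformed into $p^{-1}(V)$ over a deformation of $U$ into $V$, compatibly and inducing weak equivalences on fibres, then $p$ is a quasi-fibration over $U$ iff over $V$. Concretely, filter $SP(X/A)$ by the subspaces $SP^{n}(X/A)$ and stratify each of these according to the number of coordinates lying at the collapsed vertex. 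Over the open locus where at least one coordinate avoids the collapsed vertex, $p$ is locally a projection off a product factor, hence a quasi-fibration; the remaining strata are handled by (c), the required deformations being produced from a neighbourhood-deformation-retract structure for the cofibration $A\hookrightarrow X$ (available as $(X,A)$ is a finite CW-pair) applied coordinatewise to slide points of $X$ near $A$ into $A$. An induction on $n$ combining (a), (b) and (c) then shows that $p$ is a quasi-fibration, and its fibre over the monoid unit is exactly the subspace of symmetric configurations supported on $A$, namely $SP(A)$.

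Granting (2), part (1) follows in three steps. First, $h_{n}(X):=\pi_{n}(SP(X))$ is a reduced homology theory on finite CW-pairs: homotopy invariance is immediate, the long exact homotopy sequence of the quasi-fibration $SP(A)\to SP(X)\to SP(X/A)$ gives exactness, the natural homeomorphism $SP\big(\bigvee_{\alpha}X_{\alpha}\big)\cong\prod^{\mathrm{w}}_{\alpha}SP(X_{\alpha})$ (restricted product) gives additivity, and the dimension axiom is checked via the isomorphism $\pi_{\ast}SP(S^{n})\cong\pi_{\ast-1}SP(S^{n-1})$ coming from the quasi-fibration $SP(S^{n-1})\to SP(D^{n})\to SP(S^{n})$ with $SP(D^{n})$ contractible, and the base case $SP(S^{1})\simeq S^{1}$. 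Hence $h_{n}\cong\widetilde H_{n}(-;\mathbb Z)$. Second, for path-connected $X$ the strictly commutative topological monoid $SP(X)$ is connected, hence group-like, hence has the homotopy type of a topological abelian group, equivalently of a simplicial abelian group. Third, by Moore's theorem a simplicial abelian group is weakly equivalent to $\prod_{q}K(\pi_{q},q)$; substituting $\pi_{q}(SP(X))=\widetilde H_{q}(X;\mathbb Z)=H_{q}(X)$ for $q\ge 1$, and noting that the $q=0$ factor is trivial by connectivity, gives $SP(X)\simeq\prod_{1\le q\le\infty}K(H_{q}(X),q)$.

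The principal obstacle lies wholly in part (2): pinning down the open cover of each filtration stratum, and checking that the deformations furnished by the NDR structure of $A\hookrightarrow X$ are genuinely fibrewise weak equivalences rather than merely fibrewise maps, requires careful bookkeeping of which coordinates of a configuration lie in $A$. Everything downstream --- that $h_{\ast}$ is a homology theory, its identification with singular homology, and the appeal to Moore's theorem --- is formal.
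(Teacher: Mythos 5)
The paper offers no proof of this statement---it is quoted as the classical Dold--Thom theorem with a citation to \cite{doldthom}---and your outline reproduces the standard argument from that source and its modern expositions: the quasi-fibration criterion with its three stability lemmas applied to the filtration of $SP(X/A)$ by the $SP^{n}(X/A)$, using the NDR structure of the cofibration $A\hookrightarrow X$ and the fact that translation by an element of the connected monoid $SP(A)$ is a homotopy equivalence of fibres, for part (2); then the Eilenberg--Steenrod uniqueness argument identifying $\pi_{\ast}SP(-)$ with reduced singular homology and Moore's splitting of connected topological (equivalently simplicial) abelian groups for part (1). The proposal is correct and is essentially the proof in the cited reference.
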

\skp{0.2}
A natural map, 
$$\widehat{\bm{\theta}} \colon SP^{q_1}(X_{1})\wedge  SP^{q_2}(X_{2}) \wedge \cdots\wedge SP^{q_m}(X_{m}) \longrightarrow
SP^{q}(X_1 \wedge X_2 \wedge \cdots \wedge X_m),$$
for $q = q_{1}q_2 \cdots q_m$,\; is constructed next by setting
\begin{multline}
\widehat{\bm{\theta}}\big(\big[[x_{11},x_{12},\ldots,x_{1q_1}],[x_{21},x_{22},\ldots,x_{2q_2}],\ldots,
[x_{m1},x_{m2},\ldots,x_{mq_m}]\big]^{\wedge}\big)\\[3mm]
\;=\;\Big[\mprod_{\substack{1\leq j_t\leq q_t\\[0.7mm] 1\leq t \leq m}} 
\big[x_{1j_1},x_{2j_2},\ldots,x_{mj_m}\big]^{\wedge}\Big]\phantom{mmmmmmmmmmmm}
\end{multline}
\nd where here, square brackets $[\;]$ are used to denote equivalence classes in the 
symmetric product, and $[\;]^{\wedge}$ for the smash products. Next we introduce an 
extension of $\widehat{\bm{\theta}}$ which we shall use throughout to deduce  the main results.
\begin{thm}\label{thm:thetahat} 
The construction of the map \;$\widehat{\bm{\theta}}$ extends in a natural way to give
$$\widehat{\bm{\theta}} \colon SP(X_{1})\wedge  SP(X_{2}) \wedge \cdots\wedge  SP(X_{m}) \longrightarrow
SP(X_{1} \wedge X_{2} \wedge \cdots \wedge X_{m})$$
\nd a map of colimits.
\end{thm}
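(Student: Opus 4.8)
The plan is to upgrade the finite-stage map $\widehat{\bm{\theta}}$ from \eqref{eqn:thetahat}, or rather the maps $SP^{q_1}(X_1)\wedge\cdots\wedge SP^{q_m}(X_m)\to SP^{q_1\cdots q_m}(X_1\wedge\cdots\wedge X_m)$, to a map on colimits by checking that they are compatible with the structure maps $\bm{e}$ in \eqref{eqn:mape} on both sides, up to the reindexing $q_t\mapsto q_t+1$ on the source and $q\mapsto$ (new product) on the target. Since $SP(X_i)=\mathrm{colim}_{q_i}SP^{q_i}(X_i)$ and smash products commute with filtered colimits of well-pointed spaces (all spaces here are CW, hence well-pointed), the source $SP(X_1)\wedge\cdots\wedge SP(X_m)$ is the colimit over the filtered poset $\mathbb{N}^m$ of the $SP^{q_1}(X_1)\wedge\cdots\wedge SP^{q_m}(X_m)$; the target $SP(X_1\wedge\cdots\wedge X_m)$ is likewise a colimit over $\mathbb{N}$. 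So it suffices to produce a cofinal functor $\mathbb{N}^m\to\mathbb{N}$ (namely $(q_1,\ldots,q_m)\mapsto q_1\cdots q_m$, whose image is cofinal in $\mathbb{N}$) together with a natural transformation of the two diagrams covering it, and then pass to colimits.

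First I would record the explicit compatibility identity: for the single structure map in the $t$-th coordinate, one needs
$$\widehat{\bm{\theta}}\circ(\mathrm{id}\wedge\cdots\wedge\bm{e}_t\wedge\cdots\wedge\mathrm{id}) \;=\; (\text{appropriate iterate of }\bm{e})\circ\widehat{\bm{\theta}},$$
where on the right the number of applied basepoint-insertions is $q-q/q_t\cdot q_t=\ldots$; concretely, inserting the basepoint $\ast$ as the $(q_t+1)$-st entry of the $t$-th symmetric-product coordinate produces, after applying $\widehat{\bm{\theta}}$, exactly $q_1\cdots\widehat{q_t}\cdots q_m$ new tuples, each of which has $\ast$ in its $t$-th smash coordinate and is therefore the basepoint of $X_1\wedge\cdots\wedge X_m$; so the two ways around the square differ by padding with $q_1\cdots\widehat{q_t}\cdots q_m$ copies of the basepoint, i.e.\ by that many iterations of $\bm{e}$. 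This is a direct check on the formula \eqref{eqn:thetahat} defining $\widehat{\bm{\theta}}$, using only that $[\ldots,\ast]^{\wedge}$ is the basepoint of the smash and that $[\ldots,\ast]$ may be deleted in a symmetric product. Repeating in every coordinate gives commutativity of all the relevant squares, hence a map of $\mathbb{N}^m$-shaped diagrams into the $\mathbb{N}$-shaped diagram.

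Next I would invoke cofinality: the assignment $(q_1,\ldots,q_m)\mapsto q_1\cdots q_m$ is monotone, and since $\mathbb{N}^m$ is filtered and the image $\{q_1\cdots q_m\}$ is cofinal in $\mathbb{N}$ (it contains all of $\mathbb{N}_{\geq 1}$, taking $q_2=\cdots=q_m=1$), the induced map on colimits
$$\mathrm{colim}_{\mathbb{N}^m}SP^{q_1}(X_1)\wedge\cdots\wedge SP^{q_m}(X_m)\longrightarrow \mathrm{colim}_{\mathbb{N}}SP^{q}(X_1\wedge\cdots\wedge X_m)$$
is well defined; identifying the left side with $SP(X_1)\wedge\cdots\wedge SP(X_m)$ (commutation of smash with filtered colimits of CW complexes) and the right side with $SP(X_1\wedge\cdots\wedge X_m)$ yields the desired $\widehat{\bm{\theta}}$, and it is a map of colimits by construction. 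Naturality in the $X_i$ is inherited from naturality of each finite-stage $\widehat{\bm{\theta}}$, which is visible from \eqref{eqn:thetahat}.

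The main obstacle is the bookkeeping in the compatibility square: one must get the reindexing exactly right, i.e.\ verify that inserting one basepoint in the $t$-th $SP$-factor corresponds on the target to precisely $q_1\cdots\widehat{q_t}\cdots q_m$ applications of $\bm{e}$ (not fewer, because of possible coincidences among the new tuples when some $x_{ij}$ agree, and not more), and that this is consistent as one varies which coordinate is incremented so that the whole $\mathbb{N}^m$-diagram, not merely each coordinate line, maps compatibly. Once the square is pinned down, the colimit argument is formal. A secondary, purely technical point is to confirm that the relevant smash products genuinely commute with these particular filtered colimits; this is standard for CW pairs but should be cited or spelled out.
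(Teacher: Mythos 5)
Your proposal is correct and follows essentially the same route as the paper: the paper's proof consists precisely of checking the commutativity of the square in which $\bm{e}$ increments the $k$-th symmetric-product coordinate, with both composites landing on the original product of tuples padded by $q'-q=q_1\cdots\widehat{q_k}\cdots q_m$ basepoints, the key point being that any tuple $[\ldots,\ast,\ldots]^{\wedge}$ is the basepoint of the smash product. You spell out the colimit/cofinality bookkeeping over $\mathbb{N}^m$ more explicitly than the paper does, but the substance of the verification is identical.
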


\begin{proof}
It suffices to check that the diagram below commutes
\skp{0.3}
\begin{tikzcd}
SP^{q_1}(X_1)\wedge \cdots \wedge SP^{q_k}(X_k) \wedge \cdots \wedge  SP^{q_m}(X_m)
\arrow[r, "\widehat{\bm{\theta}}"] \arrow[d, "\bm{e}"] 
&SP^{q}(X_1 \wedge X_2 \wedge \cdots \wedge X_m) \arrow[d, "\bm{e}"]\\
SP^{q_1}(X_1)\wedge \cdots \wedge SP^{q_k + 1}(X_k) \wedge \cdots \wedge  SP^{q_m}(X_m)
\arrow[r, "\widehat{\bm{\theta}}"]
&SP^{q'}(X_1 \wedge X_2 \wedge \cdots \wedge X_m)
\end{tikzcd}
\skp{0.3}
\nd where here, $q = q_{1}q_2 \cdots q_m$,\; $q' =  q_{1}q_2 \cdots q_{k-1}(q_k +1)q_{k+1} \cdots q_m$  and the map 
$\bm{e}$ is as in \eqref{eqn:mape}.
consider then,
\begin{multline*}\bm{e}\circ \bm{\theta}\big(\big[[x_{11},x_{12},\ldots,x_{1q_1}],[x_{21},x_{22},\ldots,x_{2q_2}],\ldots,
[x_{m1},x_{m2},\ldots,x_{mq_m}]\big]^{\wedge}\big)\\[2mm]
=\; \bm{e}\Big(\Big[\mprod_{\substack{1\leq j_t\leq q_t\\[0.7mm] 1\leq t \leq m}} 
\big[x_{1j_1},x_{2j_2},\ldots,x_{mj_m}\big]^{\wedge}\Big]\Big)
\;=\; \Big[\mprod_{\substack{1\leq j_t\leq q_t\\[0.7mm] 1\leq t \leq m}} 
\big[x_{1j_1},x_{2j_2},\ldots,x_{mj_m}\big]^{\wedge},
\;\underbrace{[\ast]^{\wedge},\ldots,[\ast]^{\wedge}}_{q'-q}\;\Big].
\end{multline*}
\skp{0.2}
\nd On the other hand,
\begin{align*}\phantom{m}\bm{\theta}\circ \bm{e} \big(\big[[&x_{11},x_{12},
\ldots,x_{1q_1}],[x_{21},x_{22},\ldots,x_{2q_2}],\ldots,
[x_{m1},x_{m2},\ldots,x_{mq_m}]\big]^{\wedge}\big)\\[2mm]
&\;=\; \bm{\theta}\big(\big[[x_{11},x_{12},\ldots,x_{1q_1}],\ldots,[x_{21},x_{22},\ldots,x_{2q_k,}\ast],\ldots,
[x_{m1},x_{m2},\ldots,x_{mq_m}]\big]^{\wedge}\big)\\[2mm]
&\;=\; \Big[\mprod_{\substack{1\leq j_t\leq q_t\\[0.7mm] 
1\leq t \leq m}} \big[x_{1j_1},x_{2j_2},\ldots,x_{mj_m}\big]^{\wedge}
 \mprod_{\substack{1\leq j_t\leq q_t\\[0.7mm] 1\leq t \leq m, \;t\neq k}} 
\big[x_{1j_1},x_{2j_2},\ldots,x_{mj_{m}}, \ast\big]^{\wedge}\Big]\\[2mm]
&\;=\; \Big[\mprod_{\substack{1\leq j_t\leq q_t\\[0.7mm] 1\leq t \leq m}} 
\big[x_{1j_1},x_{2j_2},\ldots,x_{mj_m}\big]^{\wedge},
\;\underbrace{\ast,\ldots,\ast}_{q'-q}\;\Big] \qedhere
\end{align*}
\end{proof}

\nd A simple example illustrates the proof of Theorem \ref{thm:thetahat} for $m = 2$, $q = 2$ and $q' =4$.
\begin{equation}\begin{tikzcd}
SP^{1}(X_1)\wedge SP^{2}(X_2)
\arrow[r, "\widehat{\bm{\theta}}"] \arrow[d, "\bm{e}\times 1"] 
&SP^{2}(X_1 \wedge X_2) \arrow[d, "\bm{e}"]\\
SP^{2}(X_1)\wedge SP^{2}(X_2)
\arrow[r, "\widehat{\bm{\theta}}"]
&SP^{4}(X_1  \wedge X_2)
\end{tikzcd}\end{equation}
\skp{0.3}
\nd Here,  
$$(\bm{e}\circ \widehat{\bm{\theta}})\big(\big[[x_{11}],[x_{21},x_{22}]\big]^{\wedge}\big) 
= e\big[[x_{11},x_{21}]^{\wedge},[x_{11},x_{22}]^{\wedge}\big] = 
\big[[x_{11},x_{21}]^{\wedge},[x_{11},x_{22}]^{\wedge}, [\ast]^{\wedge},[\ast]^{\wedge}\big],$$
\nd whereas,
\begin{align*}
\big(\widehat{\bm{\theta}}\circ (e\times 1)\big)\big(\big[[x_{11}],[x_{21},x_{22}]\big]^{\wedge}\big) &= 
\widehat{\bm{\theta}}\big(\big[[x_{11},\ast],[x_{21},x_{22}]\big]^{\wedge}\big)\\ &=
\big[[x_{11},x_{21}]^{\wedge},[x_{11},x_{22}]^{\wedge}, [\ast, x_{21}]^{\wedge},[\ast, x_{22}]^{\wedge}\big].
\end{align*}
\nd The diagram commutes because  both $[\ast, x_{21}]^{\wedge}$ and $[\ast, x_{22}]^{\wedge}$ 
\;equal\; $[\ast]^{\wedge}$. 

\section{Connections between symmetric products and polyhedral products}\label{sec;connections}
Consider a simplicial complex $K$ on $[m]$ and  a family of pointed CW pairs $(\underline{X}, \underline{A})$.
We adopt  the notation
\begin{equation}\label{eqn:star}
\big(\underline{SP^{\ast}(X)}, \underline{SP^{\ast}(A)}\big)  \;=\; 
\big\{\big(SP^{q_i}(X_i),SP^{q_i}(A_i)\big)\big\}_{i=1}^m,
\end{equation}
and construct a structure map
\begin{equation}\label{eqn:strmap}
\bm{\zeta}\colon \widehat{Z}\big(K;\big(\underline{SP(X)}, \underline{SP(A)}\big)\big) \longrightarrow 
SP\big(\widehat{Z}(K;(\underline{X}, \underline{A}))\big).
\end{equation}
by considering first the composite map
\begin{multline}\label{eqn:iota}
\phantom{m}\widehat{Z}\big(K;\big(\underline{SP^{\ast}(X)}, \underline{SP^{\ast}(A)}\big)\big)  
\xrightarrow{\phantom{m}\bm{\iota}\phantom{m}}  
SP^{q_1}(X_1)\wedge SP^{q_2}(X_2) \wedge\cdots \wedge  SP^{q_m}(X_m)\\
\hspace{0.3in}\xrightarrow{\phantom{m}\widehat{\bm{\theta}}\phantom{m}}   
SP^{q}(X_1 \wedge X_2 \wedge \cdots \wedge X_m)\phantom{mmmmmmmmm}
\end{multline}
\skp{0.2}

\begin{lem}\label{lem:zetadefn}
A map $\bm{\zeta}$ exists in the diagram below making the diagram commute
\begin{equation}\label{eqn:extzeta}
\hspace{0.0cm}\begin{tikzcd}[column sep = 2cm]
\widehat{Z}\big(K;\big(\underline{SP^{\ast}(X)}, \underline{SP^{\ast}(A)}\big)\big) 
\arrow[r, "\widehat{\Theta}\;\circ\; \bm{\iota}"] \arrow[dr, "\bm{\zeta} ", dashrightarrow]
&SP^{q}(X_1 \wedge X_2 \wedge \cdots \wedge X_m) \\ 
&SP^{q}\big(\widehat{Z}(K;(\underline{X},\underline{A}))\big) \arrow[u,"SP^{q}(\bm{\iota}) "]
\end{tikzcd}
\end{equation}
\nd where 
$\bm{\iota}\colon \widehat{Z}(K;(\underline{X},\underline{A})) \lhook\joinrel\longrightarrow 
X_1 \wedge X_2 \wedge \cdots \wedge X_{m}$\; is the inclusion.
Moreover, the map $\bm{\zeta}$ extends to a map at 
level of infinite symmetric products
\begin{equation}\label{eqn:zetainf}
\bm{\zeta}\colon \widehat{Z}\big(K;\big(\underline{SP(X)}, \underline{SP(A)}\big)\big) 
\longrightarrow
SP\big(\widehat{Z}(K;(\underline{X},\underline{A}))\big).
\end{equation}
\end{lem}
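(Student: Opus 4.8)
The plan is to produce $\bm{\zeta}$ at the finite level first and then pass to the colimit, using naturality of all the maps involved.

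First, I would verify that the composite $\widehat{\bm{\theta}}\circ\bm{\iota}$ in \eqref{eqn:iota} lands in the image of $SP^{q}(\bm{\iota})$, so that it factors through $SP^{q}\big(\widehat{Z}(K;(\underline{X},\underline{A}))\big)$ as claimed in \eqref{eqn:extzeta}. The point is that $\widehat{Z}\big(K;\big(\underline{SP^{\ast}(X)},\underline{SP^{\ast}(A)}\big)\big)$ is the union over $\sigma\in K$ of the pieces $\widehat{D}(\sigma)=\mwedge_{i}W_i$ with $W_i=SP^{q_i}(X_i)$ for $i\in\sigma$ and $W_i=SP^{q_i}(A_i)$ for $i\notin\sigma$. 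On such a piece, an element is represented by a tuple of unordered lists, with the $i$-th list consisting of points of $A_i$ when $i\notin\sigma$. Applying $\widehat{\bm{\theta}}$ produces an unordered list of smash tuples $[x_{1j_1},\ldots,x_{mj_m}]^{\wedge}$; each such tuple has $x_{ij_i}\in A_i$ for every $i\notin\sigma$, hence lies in $\widehat{D}_{(\underline{X},\underline{A})}(\sigma)\subseteq\widehat{Z}(K;(\underline{X},\underline{A}))$. Thus every smash tuple produced lies in $\widehat{Z}(K;(\underline{X},\underline{A}))$, so the whole symmetric-product element lies in the image of $SP^{q}(\bm{\iota})$. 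Since $SP^{q}(\bm{\iota})$ is injective (a smash of inclusions of closed subcomplexes induces an inclusion on finite symmetric products), there is a unique set-theoretic lift $\bm{\zeta}$; continuity of $\bm{\zeta}$ follows because $\widehat{Z}(K;(\underline{X},\underline{A}))$ carries the subspace topology and $SP^{q}(\bm{\iota})$ is a closed embedding, so a map into the total space with image in the subspace is continuous into the subspace. This gives the finite-level $\bm{\zeta}$ and the commutativity of \eqref{eqn:extzeta}.

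Next, to pass to infinite symmetric products I would check compatibility of the finite-level maps $\bm{\zeta}$ with the stabilization maps $\bm{e}$ as the multi-indices $\underline{q}=(q_1,\ldots,q_m)$ increase. Increasing a single $q_k$ to $q_k+1$ induces, on the left, the map $\widehat{Z}\big(K;(\underline{SP^{\ast}(X)},\underline{SP^{\ast}(A)})\big)$ built coordinatewise from $\bm{e}\colon SP^{q_k}(X_k)\to SP^{q_k+1}(X_k)$ (and the analogous map on $A_k$), and on the right, the map $\bm{e}\colon SP^{q}(-)\to SP^{q'}(-)$. Theorem \ref{thm:thetahat} (in the form of the commuting square in its proof) says exactly that $\widehat{\bm{\theta}}$ intertwines these two stabilizations; composing with $\bm{\iota}$ on the source and with the evident compatibility $\bm{e}\circ SP^{q}(\bm{\iota})=SP^{q'}(\bm{\iota})\circ\bm{e}$ on the target, we get that $\bm{e}\circ\bm{\zeta}_{\underline{q}}$ and $\bm{\zeta}_{\underline{q'}}\circ(\text{stabilization})$ agree after postcomposition with the injection $SP^{q'}(\bm{\iota})$, hence agree. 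Therefore the $\bm{\zeta}_{\underline{q}}$ assemble into a map of colimits
\begin{equation*}
\bm{\zeta}\colon \widehat{Z}\big(K;\big(\underline{SP(X)},\underline{SP(A)}\big)\big)=\operatorname*{colim}_{\underline{q}}\widehat{Z}\big(K;\big(\underline{SP^{\ast}(X)},\underline{SP^{\ast}(A)}\big)\big)\longrightarrow \operatorname*{colim}_{q}SP^{q}\big(\widehat{Z}(K;(\underline{X},\underline{A}))\big)=SP\big(\widehat{Z}(K;(\underline{X},\underline{A}))\big),
\end{equation*}
which is \eqref{eqn:zetainf}; one uses here that $\widehat{Z}(K;-)$ commutes with the relevant filtered colimits of pairs, since $\widehat{Z}(K;-)$ is built from finitely many smash products and finite unions.

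The main obstacle I expect is the factorization/lifting claim: making precise that the image of $\widehat{\bm{\theta}}\circ\bm{\iota}$ genuinely lies in the subspace $SP^{q}\big(\widehat{Z}(K;(\underline{X},\underline{A}))\big)$ rather than merely in $SP^{q}(X_1\wedge\cdots\wedge X_m)$, and that the resulting lift is continuous. This is really a bookkeeping argument about which smash coordinates are constrained to lie in $A_i$ — one must check it uniformly over all simplices $\sigma\in K$ and, after stabilization, over ghost vertices as well — together with the standard but slightly delicate point that $SP^{q}$ of a closed subcomplex inclusion is a closed embedding, so that lifts through it are automatically continuous. Everything else is formal naturality together with a direct appeal to Theorem \ref{thm:thetahat}.
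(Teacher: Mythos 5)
Your proposal is correct and follows essentially the same route as the paper: the key observation in both is that each smash tuple $[x_{1j_1},\ldots,x_{mj_m}]^{\wedge}$ produced by $\widehat{\bm{\theta}}$ has its $i$-th coordinate in $A_i$ for every $i\notin\sigma$, hence lies in $\widehat{D}_{(\underline{X},\underline{A})}(\sigma)$, and the extension to infinite symmetric products is the same stabilization check against $\bm{e}$ carried out in Theorem \ref{thm:thetahat}. The only (cosmetic) difference is that the paper writes $\bm{\zeta}$ down directly by the formula \eqref{eqn:zetadefn} on each $\widehat{D}(\sigma)$ and assembles over the colimit of $K$, whereas you obtain it as the unique lift through the closed embedding $SP^{q}(\bm{\iota})$.
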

\skp{0.2}
\begin{proof}
We use the indexing from  \eqref{eqn:star}, and begin by defining
$$\bm{\zeta}\colon \widehat{D}_{(\underline{SP^{\ast}(X)}, \underline{SP^{\ast}(A)})}(\sigma)
\longrightarrow
SP^{q}\big(\widehat{D}_{(\underline{X}, \underline{A})}(\sigma)\big)$$
\nd for $\sigma \in K$, where
\begin{equation}\label{eqn:d.sp}
\widehat{D}_{(\underline{SP^{\ast}(X)}, \underline{SP^{\ast}(A)})}(\sigma) =\mwedge^m_{i=1}Y_i,\quad {\rm where}\quad
Y_i=\left\{\begin{array}{lcl}
SP^{q_i}(X_i)  &{\rm if} & i\in \sigma\\
SP^{q_i}(A_i) &{\rm if} & i\in [m]-\sigma,
\end{array}\right.
\end{equation}
\nd and
\begin{equation}\label{eqn:d.sigma.hat}
\widehat{D}_{(\underline{X}, \underline{A})}(\sigma) =\mwedge^m_{i=1}W_i,\quad {\rm where}\quad
W_i=\left\{\begin{array}{lcl}
X_i  &{\rm if} & i\in \sigma\\
A_i &{\rm if} & i\in [m]-\sigma,
\end{array}\right.
\end{equation}
\nd by
\begin{multline}\label{eqn:zetadefn}
\bm{\zeta}\big(\big[[x_{11},x_{12},\ldots,x_{1q_1}],[x_{21},x_{22},\ldots,x_{2q_2}],\ldots,
[x_{m1},x_{m2},\ldots,x_{mq_m}]\big]^{\wedge}\big)\\[3mm]
\;=\;\Big[\mprod_{\substack{1\leq j_t\leq q_t\\[0.7mm] 1\leq t \leq m}} 
\big[x_{1j_1},x_{2j_2},\ldots,x_{mj_m}\big]^{\wedge}\Big]\phantom{mmmmmmmmmmmm}
\end{multline}
The key point which makes the target of $\bm{\zeta}$ equal to
$SP^{q}\big(\widehat{D}_{(\underline{X}, \underline{A})}(\sigma)\big)$ is the observation that if
a point $[x_{r1},x_{r2},\ldots,x_{rq_r}]^{\wedge}$ is in $\widehat{A}_r^{q_r}$ then
$$\big[x_{1j_1},x_{2j_2},\ldots,x_{rj_r}, \ldots ,x_{mj_m}\big]^{\wedge}\; \in \;
X_1\wedge\cdots\wedge X_{r-1}\wedge A_r \wedge X_{r+1}\wedge\cdots\wedge X_m.$$

Next, we need to check that the diagram following commutes.
\begin{equation}\label{eqn:ddiagforzeta}
\begin{tikzcd}[column sep = 1.6cm, row sep = 0.8cm]
 \widehat{D}_{(\underline{SP^{\ast}(X)}, \underline{SP^{\ast}(A)})}(\sigma)
\arrow[r, "{\bm{\zeta}}"] \arrow[d, "\bm{\ell}"] 
&SP^{q}\big(\widehat{D}_{(\underline{X}, \underline{A})}(\sigma)\big) \arrow[d, "SP^{q}(\ell)"]\\
 \widehat{D}_{(\underline{SP^{\ast}(X)}, \underline{SP^{\ast}(A)})}(\tau)
\arrow[r, "{\bm{\zeta}}"] 
&SP^{q}\big(\widehat{D}_{(\underline{X}, \underline{A})}(\tau)\big)
\end{tikzcd}\end{equation}
\nd whenever $\sigma \xhookrightarrow{\phantom{n}\ell\phantom{n}} \tau$ in $K$, but this is immediate from
the definitions \eqref{eqn:d.sigma.hat} and \eqref{eqn:d.sp}. Taking colimits with respect to the diagram of $K$, we get the
dashed arrow of \eqref{eqn:extzeta},
\begin{equation}\label{eqn:finitezeta}
\bm{\zeta}\colon \widehat{Z}\big(K;\big(\underline{SP^{\ast}(X)}, \underline{SP^{\ast}(A)}\big)\big) 
\longrightarrow SP^{q}\big(\widehat{Z}(K;(\underline{X},\underline{A}))\big)
\end{equation}
It remains to check the commutativity of diagram \eqref{eqn:extzeta}. Let
$$\big[[x_{11},x_{12},\ldots,x_{1q_1}],[x_{21},x_{22},\ldots,x_{2q_2}],\ldots,
[x_{m1},x_{m2},\ldots,x_{mq_m}]\big]^{\wedge} \in D_{(\underline{SP^{\ast}(X)}, \underline{SP^{\ast}(A)})}(\sigma),$$
\nd we have,
\begin{align*}
\phantom{m}(\widehat{\bm{\theta}}\circ \bm{\iota})\big(\big[[&x_{11},x_{12},\ldots,x_{1q_1}],[x_{21},x_{22},\ldots,x_{2q_2}],\ldots,
[x_{m1},x_{m2},\ldots,x_{mq_m}]\big]^{\wedge}\big)\\[2mm]
&=\;\widehat{\bm{\theta}}\big(\big[[x_{11},x_{12},\ldots,x_{1q_1}],[x_{21},x_{22},\ldots,x_{2q_2}],\ldots,
[x_{m1},x_{m2},\ldots,x_{mq_m}]\big]^{\wedge}\big)\\[2mm]
&=\;\Big[\mprod_{\substack{1\leq j_t\leq q_t\\[0.7mm] 1\leq t \leq m}} 
\big[x_{1j_1},x_{2j_2},\ldots,x_{mj_m}\big]^{\wedge}\Big]
\end{align*}
Also,
\begin{align*}
\phantom{m}\big(SP^{q}(\bm{\iota})\circ \bm{\zeta}\big)\big(\big[[&x_{11},x_{12},\ldots,
x_{1q_1}],[x_{21},x_{22},\ldots,x_{2q_2}],\ldots,
[x_{m1},x_{m2},\ldots,x_{mq_m}]\big]^{\wedge}\big)\\[2mm]
&=\;SP^{q}(\bm{\iota})\Big(\Big[\mprod_{\substack{1\leq j_t\leq q_t\\[0.7mm] 1\leq t \leq m}} 
\big[x_{1j_1},x_{2j_2},\ldots,x_{mj_m}\big]^{\wedge}\Big]\Big)\\[2mm]
&=\;\Big[\mprod_{\substack{1\leq j_t\leq q_t\\[0.7mm] 1\leq t \leq m}} 
\big[x_{1j_1},x_{2j_2},\ldots,x_{mj_m}\big]^{\wedge}\Big]
\end{align*}
\skp{0.2}
Finally, we need to check that the map $\bm{\zeta}$ extends to a map of the colimits defining the infinite 
symmetric products, as in \eqref{eqn:zetainf}. To this end, we fix
$k \in [m]$ and modify \eqref{eqn:star} by setting
\begin{equation}\label{eqn:starplusone}
\big(\underline{SP^{\ast_k}(X)}, \underline{SP^{\ast_k}(A)}\big)  \;=\; 
\big\{\big(SP^{q'_i}(X_i),SP^{q'_i}(A_i)\big)\big\}_{i=1}^m,
\end{equation}
\nd where 
$$q'_i  = \begin{cases} q_i  \hspace{9.6mm}\rm{if}\hspace{2mm} i \neq k\\
                         q_k +1  \hspace{2mm}\rm{if}\hspace{2mm}  i =k \end{cases}$$ 
\skp{0.2} 
\nd and $q' = q'_{1}q'_{2}\cdots q'_{m}$.  It suffices now to check the commutativity of the diagram
      
\begin{equation}\begin{tikzcd}[column sep = 1.1cm, row sep = 0.8cm]
\widehat{Z}\big(K;\big(\underline{SP^{\ast}(X)}, \underline{SP^{\ast}(A)}\big)\big) 
\arrow[r, "{\bm{\zeta}}"] \arrow[d, "\widehat{Z}(K;\;\bm{e})"] 
&SP^{q}\big(\widehat{Z}(K;(\underline{X},\underline{A}))\big)  \arrow[d, "e"]\\
\widehat{Z}\big(K;\big(\underline{SP^{\ast_k}(X)}, \underline{SP^{\ast_k}(A)}\big)\big) 
\arrow[r, "{\bm{\zeta}}"] 
&SP^{q'}\big(\widehat{Z}(K;(\underline{X},\underline{A}))\big)
\end{tikzcd}\end{equation}           
\nd In the notation of \eqref{eqn:zetadefn}, we have
\begin{multline*}(\bm{e}\circ \bm{\zeta})\big(\big[[x_{11},x_{12},\ldots,x_{1q_1}],[x_{21},x_{22},\ldots,x_{2q_2}],\ldots,
[x_{m1},x_{m2},\ldots,x_{mq_m}]\big]^{\wedge}\big)\\[2mm]
=\; \bm{e}\Big(\Big[\mprod_{\substack{1\leq j_t\leq q_t\\[0.7mm] 1\leq t \leq m}} 
\big[x_{1j_1},x_{2j_2},\ldots,x_{mj_m}\big]^{\wedge}\Big]\Big)
\;=\; \Big[\mprod_{\substack{1\leq j_t\leq q_t\\[0.7mm] 1\leq t \leq m}} 
\big[x_{1j_1},x_{2j_2},\ldots,x_{mj_m}\big]^{\wedge},
\;\underbrace{\ast,\ldots,\ast}_{q'-q}\;\Big].
\end{multline*}

\nd On the other hand,
\begin{align*}\phantom{m}(\bm{\zeta}\circ\bm{e})\big(\big[[&x_{11},x_{12},
\ldots,x_{1q_1}],[x_{21},x_{22},\ldots,x_{2q_2}],\ldots,
[x_{m1},x_{m2},\ldots,x_{mq_m}]\big]^{\wedge}\big)\\[2mm]
&\;=\; \bm{\zeta}\big(\big[[x_{11},x_{12},\ldots,x_{1q_1}],\ldots,[x_{21},x_{22},\ldots,x_{2q_k,}\ast],\ldots,
[x_{m1},x_{m2},\ldots,x_{mq_m}]\big]^{\wedge}\big)\\[2mm]
&\;=\; \Big[\mprod_{\substack{1\leq j_t\leq q_t\\[0.7mm] 
1\leq t \leq m}} \big[x_{1j_1},x_{2j_2},\ldots,x_{mj_m}\big]^{\wedge}
 \mprod_{\substack{1\leq j_t\leq q_t\\[0.7mm] 1\leq t \leq m, \;t\neq k}} 
\big[x_{1j_1},x_{2j_2},\ldots,x_{mj_{m}}, \ast\big]^{\wedge}\Big]\\[2mm]
&\;=\; \Big[\mprod_{\substack{1\leq j_t\leq q_t\\[0.7mm] 1\leq t \leq m}} 
\big[x_{1j_1},x_{2j_2},\ldots,x_{mj_m}\big]^{\wedge},
\;\underbrace{\ast,\ldots,\ast}_{q'-q}\;\Big] \qedhere
\end{align*}
\end{proof}
The next construction is standard.

\begin{lem}\label{lem:multext}
A map of the form $\phi\colon X \longrightarrow SP^{k}(Y)$, which induces a map 
$$\phi\colon X \longrightarrow SP(Y)$$  
admits a canonical multiplicative extension ${\psi}\colon SP(X) \longrightarrow SP(Y)$. This extension is
the identity map if $X = Y$ and the map ${\phi}$ is the inclusion $E_X$.

\begin{proof}
 The map $\psi$ is defined by the map $\phi$ as a composite
\begin{equation}\label{eqn:multext}
SP^{q}(X)\; \xrightarrow{SP^{q}(\phi)}\; SP^{q}\big(SP^{k}(Y)\big)\; \xrightarrow{\eta} \; SP^{qk}(Y)
\end{equation}
\nd where the map $\eta$ is given by:
\begin{multline*}
\big[[y_{11},y_{12},\ldots,y_{1k}],[y_{21},y_{22},\ldots,y_{2k}],\;\ldots\;, [y_{q1},x_{q2},\ldots,y_{qk}]\big]\\
\;\xmapsto{\phantom{mmm}}\;
[y_{11},y_{12},\ldots,y_{1k}, y_{21},x_{22},\ldots,y_{2k},\;\ldots\;, y_{q1},y_{q2},\ldots,y_{qk}]
\end{multline*}
so, writing $\phi(x_i) = [\phi(x_i)_1,\phi(x_i)_2,\ldots,\phi(x_i)_k] \in SP^{k}(Y)$, we have
\begin{multline*}
\psi\big([x_1,x_2,\ldots,x_q]\big) \;= \;\eta\big(\big[\phi(x_1),\phi(x_2),\ldots,\phi(x_q)\big]\big)\\
=\; [\phi(x_1)_1,\phi(x_1)_2,\ldots,\phi(x_1)_k, \phi(x_2)_1,\phi(x_2)_2,\ldots,\phi(x_2)_k,
\;\ldots\;, \phi(x_q)_1,\phi(x_q)_2,\ldots,\phi(x_q)_k]
\end{multline*} 

The map $\psi$ fits into a commutative diagram as follows.
\begin{equation}\begin{tikzcd}[column sep = 1.1cm, row sep = 0.8cm]
SP^{q}(X)
\arrow[r, "{\psi}"] \arrow[d, "\bm{e}"] 
&SP^{qk}(Y)  \arrow[d, "\bm{e}"]\\
SP^{q+1}(X)
\arrow[r, "{\psi}"] 
&SP^{(q+1)k}(Y).
\end{tikzcd}\end{equation} 
\nd More specifically, let \;$[x_1,x_2,\ldots,x_q] \in SP^{q}(X)$, then

\begin{multline*}
(\bm{e}\circ \psi)\big([x_1,x_2,\ldots,x_q]\big)
\;=\;\bm{e}\big(\eta([\phi(x_1),\phi(x_2),\ldots,\phi(x_q)])\big)\\
\;=\;\big[\phi(x_1)_1,\phi(x_1)_2,\ldots,\phi(x_1)_k,
\quad\ldots\quad, \phi(x_q)_1,\phi(x_q)_2,\ldots,\phi(x_q)_k,\;\underbrace{\ast,\ldots,\ast}_{k}\big]
\end{multline*}
On the other hand,
\begin{align*}
\hspace{-1in}(\psi\circ \bm{e})\big([x_1,x_2,\ldots,x_q]\big)
&\;=\;\psi(\big([x_1,x_2,\ldots,x_q,\ast]\big)\\
&\;=\;\eta\big([\phi(x_1),\phi(x_2),\ldots,\phi(x_q),\phi(\ast)]\big)\\[-1.5mm]
&\;=\;[\phi(x_1),\phi(x_2),\ldots,\phi(x_q),\;\underbrace{\ast,\ldots,\ast}_{k}\;]
\end{align*}
$$\phantom{mmmm}=\;\big[\phi(x_1)_1,\phi(x_1)_2,\ldots,\phi(x_1)_k,
\quad\ldots\quad, \phi(x_q)_1,\phi(x_q)_2,\ldots,\phi(x_q)_k,\;\underbrace{\ast,\ldots,\ast}_{k}\big]$$

The second  statement  of the lemma follows from the definition of the map $E_X$.
Notice further that, for the map $E_X$ of \eqref{eqn:defofE}, we see that
\begin{equation}\label{eqn:resttoX}
(\psi\circ E_X)\colon X \longrightarrow SP(Y)
\end{equation}\label{eqn:psiagrees}
\nd coincides with $\phi\colon X \longrightarrow SP(Y)$. \qedhere
\end{proof}
\end{lem}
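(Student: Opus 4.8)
The plan is to build $\psi$ explicitly on each finite symmetric product and then pass to the colimit, since the later arguments will want the explicit formula for $\psi$ and not merely its existence. (Conceptually $\psi$ is forced: $SP(X)$ is essentially the free commutative topological monoid on the pointed space $(X,\ast)$, so any pointed map from $X$ into the commutative monoid $SP(Y)$ extends uniquely to a monoid homomorphism; but it is the explicit model that will be used.) For each $q$ I would define $\psi$ on $SP^{q}(X)$ as the composite of $SP^{q}(\phi)\colon SP^{q}(X)\to SP^{q}\bigl(SP^{k}(Y)\bigr)$ with the concatenation map $\eta\colon SP^{q}\bigl(SP^{k}(Y)\bigr)\to SP^{qk}(Y)$ that forgets the inner bracketing of a nested configuration. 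The first thing to check is that $\eta$ is well defined and continuous: it is induced by the identity map of $Y^{qk}$, which is invariant under the wreath product $\Sigma_{q}\wr\Sigma_{k}\subseteq\Sigma_{qk}$, hence descends to the relevant orbit spaces, with continuity automatic since the defining projections are quotient maps.

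Next I would verify that these level-$q$ maps are compatible with the stabilisation maps $\bm{e}$ of \eqref{eqn:mape}, so that they assemble into a well-defined map of colimits $\psi\colon SP(X)\to SP(Y)$. This is the step where the hypothesis that $\phi$ \emph{induces} a map $X\to SP(Y)$ gets used: it forces $\phi(\ast)$ to be the $k$-fold basepoint of $SP^{k}(Y)$, so that appending $\ast$ to a configuration in $SP^{q}(X)$ corresponds under $\psi$ to appending $k$ copies of the basepoint of $Y$, which is exactly the effect of iterating $\bm{e}$. Writing $\phi(x_{i})=[\phi(x_{i})_{1},\dots,\phi(x_{i})_{k}]$, I would check by direct inspection that $\bm{e}\circ\psi$ and $\psi\circ\bm{e}$ both send $[x_{1},\dots,x_{q}]$ to the concatenated configuration obtained from the $\phi(x_{i})$, followed by the appropriate number of basepoints. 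Multiplicativity of $\psi$ is then immediate, since $\eta$ converts the juxtaposition of two nested configurations into the juxtaposition of their concatenations; and $\psi\circ E_{X}=\phi$, for $E_{X}$ as in \eqref{eqn:defofE}, holds because $E_{X}(p)=[p]\in SP^{1}(X)$ and $\eta\bigl([\phi(p)]\bigr)=\phi(p)$. For the last assertion: when $X=Y$ and $\phi=E_{X}$ one has $k=1$, each $SP^{q}(\phi)$ is the identity, and $\eta\colon SP^{q}\bigl(SP^{1}(Y)\bigr)\to SP^{q}(Y)$ is the canonical identification, so $\psi$ is the identity.

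I do not expect a genuine obstacle here: the content is essentially bookkeeping with unordered tuples and symmetric-group quotients. If any point is delicate it is the compatibility with $\bm{e}$ at the level of the colimit, and there the only substantive input is the basepoint condition on $\phi$; without it $\psi$ would fail to preserve basepoints, so it is precisely this hypothesis that turns the statement into a (mild) lemma rather than a triviality.
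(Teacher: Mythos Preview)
Your proposal is correct and follows essentially the same route as the paper: define $\psi$ on $SP^{q}(X)$ as the composite $\eta\circ SP^{q}(\phi)$ with $\eta$ the concatenation map, then verify compatibility with the stabilisation maps $\bm{e}$ by direct inspection (using that $\phi(\ast)$ is the $k$-fold basepoint), and finally read off $\psi\circ E_X=\phi$ and the special case $\phi=E_X$. Your additional remarks on the wreath-product justification for $\eta$ and the free-monoid interpretation are sound elaborations, but the core argument is the same as the paper's.
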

\skp{0.2}

Applying Lemma \ref{lem:multext} to the map $\bm{\zeta}$ of \eqref{eqn:zetainf}, we get its multiplicative
extension
\begin{equation}\label{eqn:bigpsi}
\psi\colon SP\big(\widehat{Z}\big(K;\big(\underline{SP(X)}, \underline{SP(A)}\big)\big) 
\longrightarrow
SP\big(\widehat{Z}(K;(\underline{X},\underline{A}))\big).
\end{equation}
More properties of the map $\psi$ are given next.
\skp{0.3}
\begin{lem}\label{lem:spandpp}
Let $K$ be a simlicial complex on $[m]$ and $(\underline{X}, \underline{A})$ a family of finite pointed CW pairs.
\skp{0.1}
\begin{enumerate}\itemsep3mm
\item The maps $(E_{X_i},E_{A_i}) \colon (X_i,A_i) \longrightarrow 
\big(SP(X_{i}), SP(A_{i}\big)$ induce a morphism of\\ polyhedral smash products
$$E^{K}_{(\underline{X}, \underline{A})} \colon\; \widehat{Z}(K; (\underline{X},\underline{A})\big) \longrightarrow
\widehat{Z}\big(K;(\underline{SP(X)},\underline{SP(A)})\big).$$
\item There is a commutative diagram
\begin{equation*}
\hspace{0.0cm}\begin{tikzcd}[column sep = 2.6cm, row sep = 1.1cm]
 \widehat{Z}(K;(\underline{X},\underline{A}))
\arrow[r, "E^{K}_{(\underline{X}, \underline{A})}"] \arrow[dr, "E_{ \widehat{Z}(K;(\underline{X},\underline{A})}"']
&\widehat{Z}\big(K;(\underline{SP(X)},\underline{SP(A)})\big)
\arrow[d, "\bm{\zeta} "] \\ 
&SP\big(\widehat{Z}\big(K;(\underline{X},\underline{A})\big)\big)
\end{tikzcd}
\end{equation*}
\item There is a strictly commutative multiplicative diagram
\begin{equation*}
\hspace{0.0cm}\begin{tikzcd}[column sep = 2.6cm, row sep = 1.1cm]
SP\big(\widehat{Z}(K;(\underline{X},\underline{A}))
\arrow[r, "SP\big(E^{K}_{(\underline{X}, \underline{A})}\big)"] \arrow[dr, "1"']
&SP\big(\widehat{Z}\big(K;(\underline{SP(X)},\underline{SP(A)}\big)
\arrow[d, "\psi "] \\ 
&SP\big(\widehat{Z}\big(K;(\underline{X},\underline{A})\big)\big) 
\end{tikzcd}
\end{equation*}
\nd where
\begin{equation*}
\hspace{0.0cm}\begin{tikzcd}[column sep = 2.6cm, row sep = 1.1cm]
SP\big(\widehat{Z}(K;(\underline{X},\underline{A}))
\arrow[r, "SP\big(E^{K}_{(\underline{X}, \underline{A})}\big)"] 
&SP\big(\widehat{Z}\big(K;(\underline{SP(X)},\underline{SP(A)}\big)
\end{tikzcd}
\end{equation*}
\nd is multiplicative. 
\end{enumerate}
\end{lem}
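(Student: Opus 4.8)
The plan is to dispatch the three statements in order, the first two by direct verification and the third by a formal generation argument. For part $(1)$, recall that $\widehat{Z}(K;-)$ is a functor from families of pointed CW pairs, with morphisms the pairs of pointed maps, to pointed spaces: a family $f_{i}\colon (X_{i},A_{i})\to (Y_{i},B_{i})$ of maps of pairs induces a compatible family of maps $\widehat{D}_{(\underline{X},\underline{A})}(\sigma)\to \widehat{D}_{(\underline{Y},\underline{B})}(\sigma)$ commuting with the structure maps $d_{\sigma,\tau}$, hence a map of colimits. So I would simply observe that $(E_{X_{i}},E_{A_{i}})$ is a morphism of pairs --- the restriction of $E_{X_{i}}$ to $A_{i}$ equals $E_{A_{i}}$ composed with the inclusion $SP(A_{i})\hookrightarrow SP(X_{i})$, which is immediate from the formula $p\mapsto [p]$ --- and define $E^{K}_{(\underline{X},\underline{A})}$ to be the induced map of polyhedral smash products.

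For part $(2)$, I would check the triangle on each space $\widehat{D}_{(\underline{X},\underline{A})}(\sigma)$ and then pass to the colimit over $K$. On a point $[x_{1},\dots,x_{m}]^{\wedge}$ the map $E^{K}_{(\underline{X},\underline{A})}$ produces $[[x_{1}],\dots,[x_{m}]]^{\wedge}$, a point of $\widehat{D}_{(\underline{SP(X)},\underline{SP(A)})}(\sigma)$ all of whose coordinates are singleton classes $[x_{i}]$; evaluating $\bm{\zeta}$ on it via the defining formula \eqref{eqn:zetadefn} with every $q_{i}=1$ collapses the product to its one factor and yields the singleton $[\,[x_{1},\dots,x_{m}]^{\wedge}\,]$, which is exactly $E_{\widehat{D}_{(\underline{X},\underline{A})}(\sigma)}$ evaluated on the original point. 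Since both $\bm{\zeta}$ and the maps $E$ are compatible with the face inclusions (diagram \eqref{eqn:ddiagforzeta} together with the naturality from part $(1)$), this identity passes to the colimit to give $\bm{\zeta}\circ E^{K}_{(\underline{X},\underline{A})}=E_{\widehat{Z}(K;(\underline{X},\underline{A}))}$.

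For part $(3)$, the key structural remark is that for any pointed space $V$ the monoid $SP(V)$ is generated, as a topological monoid under concatenation, by the image of $E_{V}$ --- every class $[v_{1},\dots,v_{n}]$ is the product $[v_{1}]\cdots[v_{n}]$ and $SP(V)=\operatorname{colim}_{n}SP^{n}(V)$ --- so any two continuous multiplicative maps out of $SP(V)$ that agree after precomposition with $E_{V}$ coincide. Now $SP(E^{K}_{(\underline{X},\underline{A})})$ is multiplicative because $SP(-)$ applied to any map is a homomorphism for concatenation, and $\psi$ is multiplicative by its construction in Lemma \ref{lem:multext}; hence $\psi\circ SP(E^{K}_{(\underline{X},\underline{A})})$ is multiplicative and it suffices to identify it after precomposition with $E_{\widehat{Z}(K;(\underline{X},\underline{A}))}$. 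By naturality of $E$, $SP(E^{K}_{(\underline{X},\underline{A})})\circ E_{\widehat{Z}(K;(\underline{X},\underline{A}))}=E_{\widehat{Z}(K;(\underline{SP(X)},\underline{SP(A)}))}\circ E^{K}_{(\underline{X},\underline{A})}$; applying the last clause of Lemma \ref{lem:multext} (namely $\psi\circ E_{\widehat{Z}(K;(\underline{SP(X)},\underline{SP(A)}))}=\bm{\zeta}$) and then part $(2)$ gives $\psi\circ SP(E^{K}_{(\underline{X},\underline{A})})\circ E_{\widehat{Z}(K;(\underline{X},\underline{A}))}=\bm{\zeta}\circ E^{K}_{(\underline{X},\underline{A})}=E_{\widehat{Z}(K;(\underline{X},\underline{A}))}$. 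Thus $\psi\circ SP(E^{K}_{(\underline{X},\underline{A})})$ and the identity of $SP(\widehat{Z}(K;(\underline{X},\underline{A})))$ are continuous multiplicative maps agreeing on the generating set, hence strictly equal, and the multiplicativity of $SP(E^{K}_{(\underline{X},\underline{A})})$ noted above completes the diagram.

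I would expect no serious obstacle: everything follows formally from the explicit formula for $\bm{\zeta}$ together with the universal property of $SP$ as a free commutative monoid. The one point deserving care is the bookkeeping between the finite symmetric products $SP^{q_{i}}$ used to construct $\bm{\zeta}$ and the infinite ones appearing in the statement --- one must know that $\bm{\zeta}$, $E^{K}_{(\underline{X},\underline{A})}$ and the various $E$-maps are all compatible with the colimit structure maps $\bm{e}$, so that each diagram is meaningful at the level of infinite symmetric products --- but this compatibility is precisely what Lemmas \ref{lem:zetadefn} and \ref{lem:multext} were arranged to supply.
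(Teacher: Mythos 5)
Your proposal is correct and follows essentially the same route as the paper: part (1) by functoriality of the polyhedral smash product, part (2) by evaluating $\bm{\zeta}$ on singleton classes via its defining formula, and part (3) by the canonical multiplicative extension of Lemma \ref{lem:multext} together with part (2). The only difference is that you make explicit the uniqueness of a multiplicative map out of $SP(-)$ determined on the generators $E_{V}(V)$, a point the paper's one-paragraph proof leaves implicit when it ``applies Lemma \ref{lem:multext} to the diagram of part 2''; this is a welcome clarification rather than a divergence.
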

\begin{proof}
Part $1$ is a consequence of the functoriality of the polyhedral smash product and  part $2$ follows from the
definition of $\bm{\zeta}$ \eqref{eqn:zetadefn}. Next, applying \eqref{eqn:resttoX} from Lemma \ref{lem:multext},
we see that the map $\psi$, (\eqref{eqn:bigpsi}), of part $3$, restricted to $\widehat{Z}\big(K;(\underline{SP(X)},\underline{SP(A)})\big)$,
coincides with the map $\bm{\zeta}$. The diagram of part $3$ follows by applying Lemma \ref{lem:multext} to the 
maps $E^{K}_{(\underline{X}, \underline{A})}$ and $\bm{\zeta}$ in the diagram of part $2$. \qedhere
\end{proof}
\skp{0.2}
This lemma admits further extensions for subspaces of polyhedral smash products.
\begin{lem}
Let $K$ be as in Lemma \ref{lem:spandpp}, and $(\underline{X}, \underline{A})$ and $(\underline{U}, \underline{V})$
be families of pairs of pointed, finite connected CW complexes. Assume further that there are maps of pointed pairs
$$g_i \colon (U_i,V_i) \longrightarrow \big(SP(X_i), SP(A_i)\big).$$
\nd Then
\begin{enumerate}\itemsep5mm
\item There are induced maps
$$\underline{g}\colon \underline{U}^{[m]} \longrightarrow SP(X_{1})\wedge  SP(X_{2}) \wedge \cdots\wedge SP(X_{m})$$ 
and
$$\widehat{D}_{(\underline{U}, \underline{V})}(\omega) \xrightarrow{\phantom{m}\underline{g}_{\omega}\phantom{m}}
\widehat{D}_{(\underline{SP(X)}, \underline{SP(A)})}(\omega)$$
\nd for $\omega \in K$.
\item For $\sigma \subset \tau \in K$, there is a strictly commutative diagram, obtained by restriction, as follows:
\begin{equation}\label{eqn:diagramofds}
\begin{tikzcd}
\widehat{D}_{(\underline{U}, \underline{V})}(\sigma)
\arrow[r, "\lambda_{\sigma}"] \arrow[d, "\bm{\ell}"] 
&SP\big(\widehat{D}_{(\underline{X}, \underline{A})}(\sigma)\big) \arrow[d, "SP(\bm{\ell})"]\\
\widehat{D}_{(\underline{U}, \underline{V})}(\tau)
\arrow[r, "\lambda_{\tau}"]
&SP\big(\widehat{D}_{(\underline{X}, \underline{A})}(\tau)\big)
\end{tikzcd}\end{equation}
\skp{0.3}
\nd where $\bm{i}\colon \tau \hookrightarrow \sigma$ denotes the inclusion of faces and, for $\omega \in K$, 
the map $\lambda_{\omega}$
is the composite
\begin{equation}\label{eqn:lambdafact}
\widehat{D}_{(\underline{U}, \underline{V})}(\omega) \xrightarrow{\phantom{m}\underline{g}_{\omega}\phantom{m}}
\widehat{D}_{(\underline{SP(X)}, \underline{SP(A)})}(\omega)
\xrightarrow{\phantom{m}{\bm{\zeta}}\phantom{m}}
SP\big(\widehat{D}_{(\underline{X}, \underline{A})}(\omega)\big)
\end{equation}
\nd  and $\bm{\zeta}$ is constructed in Lemma \ref{lem:zetadefn}.
\end{enumerate}
\end{lem}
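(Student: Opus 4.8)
The plan is to obtain both parts as purely formal consequences of functoriality of the smash product together with the properties of $\bm{\zeta}$ already established in Lemma~\ref{lem:zetadefn}; no point-set subtleties intervene once the wedge-factor indexing is pinned down.

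For Part~(1), I would simply smash the $g_i$ together. Functoriality of the smash product gives $\underline{g} = g_1\wedge\cdots\wedge g_m\colon \underline{U}^{[m]}\longrightarrow SP(X_1)\wedge\cdots\wedge SP(X_m)$. For a fixed $\omega\in K$ the space $\widehat{D}_{(\underline{U},\underline{V})}(\omega)$ is the wedge $\mwedge^m_{i=1}W_i$ with $W_i=U_i$ when $i\in\omega$ and $W_i=V_i$ when $i\notin\omega$; applying $g_i$ on the $i$-th factor produces a map into $\mwedge^m_{i=1}Y_i$, and the one point to check — which is exactly where the pair hypothesis on $g_i$ is used — is that $g_i(V_i)\subseteq SP(A_i)$, so that $Y_i=SP(X_i)$ for $i\in\omega$ and $Y_i=SP(A_i)$ for $i\notin\omega$, i.e. the target is precisely $\widehat{D}_{(\underline{SP(X)},\underline{SP(A)})}(\omega)$. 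This defines the refinement $\underline{g}_\omega$ of $\underline{g}$.

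For Part~(2), I would use the factorization $\lambda_\omega=\bm{\zeta}\circ\underline{g}_\omega$ of \eqref{eqn:lambdafact} to split the square \eqref{eqn:diagramofds} into two stacked squares: an upper one whose horizontal arrows are $\underline{g}_\sigma,\underline{g}_\tau$ and whose vertical arrows are $\bm{\ell}$ on the left and the corresponding $\widehat{D}$-inclusion $\bm{\ell}'$ between the $\widehat{D}_{(\underline{SP(X)},\underline{SP(A)})}$'s on the right, and a lower one whose horizontal arrows are the two copies of $\bm{\zeta}$ and whose vertical arrows are $\bm{\ell}'$ and $SP(\bm{\ell})$. The lower square is exactly diagram \eqref{eqn:ddiagforzeta} of Lemma~\ref{lem:zetadefn} (once $\bm{\ell}'$ is identified with the map produced by the functoriality there), hence commutes. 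The upper square commutes factor by factor: on each smash factor with index in $\tau\setminus\sigma$ the two composites are $g_i$ followed by the inclusion $SP(A_i)\hookrightarrow SP(X_i)$ versus the inclusion $V_i\hookrightarrow U_i$ followed by $g_i$, which agree because $g_i$ is a map of pairs; on every other factor both composites equal $g_i$. Pasting the two squares yields $SP(\bm{\ell})\circ\lambda_\sigma=\lambda_\tau\circ\bm{\ell}$, and strict commutativity (not merely up to homotopy) is immediate since every map in sight is defined on the nose.

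The main obstacle is purely organizational: one must keep the wedge-factor indexing coherent across the three families $(\underline{U},\underline{V})$, $(\underline{SP(X)},\underline{SP(A)})$ and $(\underline{X},\underline{A})$, and verify that the middle vertical map $\bm{\ell}'$ is precisely the one produced by the functoriality statement in Lemma~\ref{lem:zetadefn}, so that \eqref{eqn:ddiagforzeta} applies verbatim. Once that identification is made, the only genuine verification left is the single factorwise compatibility of each $g_i$ with the pair inclusions, which is built into the hypothesis.
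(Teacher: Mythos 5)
Your proposal is correct and follows essentially the same route as the paper: the paper likewise defines $\underline{g}$ as the $m$-fold smash of the $g_i$ and proves commutativity of the square by splitting it along the factorization $\lambda_\omega = \bm{\zeta}\circ\underline{g}_\omega$ into a first square (commuting factorwise because each $g_i$ is a map of pairs) and a second square which is exactly the diagram already verified in Lemma \ref{lem:zetadefn}. Your write-up simply spells out the factorwise check that the paper calls ``straightforward.''
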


\begin{proof}
The map $\underline{g}$  arises from the $m$-fold smash product of maps $g_i$ in a natural way. The
commutativity of the diagram \eqref{eqn:diagramofds} is obtained by splitting it into
two diagrams corresponding to the factorization of the map $\lambda_w$ as \eqref{eqn:lambdafact}.
The commutativity of the first diagram, corresponding to $\underline{g}_{\omega}$, is straightforward and the
second, corresponding to the map $\bm{\zeta}$, is \eqref{eqn:ddiagforzeta}. \qedhere
\end{proof}

\section{An extension from wedge decomposable pairs to the general case}\label{sec:gen.case}
The purpose of this section is to begin the task of extending  Theorem \ref{thm:main}, the Cartan formula 
for wedge decomposable
pairs, to a homological Cartan formula for arbitrary pointed path-connected pairs of finite CW-complexes 
$(\underline{X},\underline{A})$.
\begin{thm} \label{thm:Cartan_and_general_homology} (Homological Cartan formula)
Let $K$ be an abstract simplicial complex with $m$ vertices. Assume
that $(\underline{X},\underline{A})$ are pointed, path-connected pairs of finite 
CW-complexes for all $i$. There exist spaces $B_j,C_j, E_j$, $1 \leq j \leq m$, which are finite wedges of spheres and mod-$n$ Moore spaces together with a homotopy equivalence $$SP(\widehat{Z}(K; (\underline{B \vee C},\underline{B\vee E})))\to SP(\widehat{Z}(K; (\underline{X},\underline{A}))).$$ Thus $H_{\ast}(\widehat{Z}(K; (\underline{X},\underline{A})))$ is isomorphic to $H_{\ast}(\widehat{Z}(K; (\underline{B \vee C},\underline{B\vee E})))$ over the integers. This allows for a description of  $H_{\ast}(\widehat{Z}(K; (\underline{X},\underline{A})))$ in terms of the decompositions of Theorem \ref{thm:main} and Corollary \ref{cor:wedge}.

\end{thm}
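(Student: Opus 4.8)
**

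The plan is to construct, for each $i$, a pair of finite wedges of spheres and mod-$n$ Moore spaces mapping by homology isomorphism into $(SP(X_i),SP(A_i))$, and then to promote this into an equivalence of symmetric products of the whole polyhedral smash product. First I would fix $i$ and apply the classical fact that any finitely generated abelian group $G$ admits a finite wedge $M(G)$ of spheres and mod-$n$ Moore spaces with $\widetilde H_*(M(G))\cong G$; choosing a CW model for the cofibration $A_i\hookrightarrow X_i$ one obtains the long exact sequence in reduced homology, and since everything is finitely generated one builds $E_i\hookrightarrow C_i$ (a null-homotopic inclusion of wedges of Moore spaces and spheres) together with $B_i$, realizing the decomposition $H_*(X_i)\cong H_*(B_i)\oplus H_*(C_i)$, $H_*(A_i)\cong H_*(B_i)\oplus H_*(E_i)$ in a way compatible with the map $H_*(A_i)\to H_*(X_i)$. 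This is exactly the \emph{wedge decomposable} model: set $U_i=B_i\vee C_i$, $V_i=B_i\vee E_i$. One then has maps of pairs $g_i\colon (U_i,V_i)\to (SP(X_i),SP(A_i))$ — here one uses that a map from a wedge of Moore spaces and spheres into a space is determined up to homotopy by its effect on homology, together with the fact that $SP(-)$ is a product of Eilenberg--Mac Lane spaces by Theorem \ref{thm:doldthom}(1), so $SP(X_i)$ receives a canonical map inducing the prescribed homology class, and similarly for $SP(A_i)$ compatibly — which are homology isomorphisms onto $SP(X_i),SP(A_i)$ after one more application of Dold--Thom, or more precisely $g_i$ is a map which, composed with $E_{X_i}$-type inclusions, realizes the homology identification.

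Next I would feed the maps $g_i\colon (U_i,V_i)\to (SP(X_i),SP(A_i))$ into the last lemma of Section \ref{sec;connections}, producing compatible maps $\lambda_\omega\colon \widehat D_{(\underline U,\underline V)}(\omega)\to SP\big(\widehat D_{(\underline X,\underline A)}(\omega)\big)$ for $\omega\in K$, strictly commuting with the face inclusions by diagram \eqref{eqn:diagramofds}. Taking the colimit over the category $K$ gives a map
\[
\Lambda\colon \widehat Z\big(K;(\underline U,\underline V)\big)\longrightarrow SP\big(\widehat Z(K;(\underline X,\underline A))\big),
\]
and applying Lemma \ref{lem:multext} to $\Lambda$ yields its multiplicative extension
\[
\widetilde\Lambda\colon SP\big(\widehat Z(K;(\underline U,\underline V))\big)\longrightarrow SP\big(\widehat Z(K;(\underline X,\underline A))\big).
\]
The claim is that $\widetilde\Lambda$ is a homotopy equivalence. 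Since both sides are products of Eilenberg--Mac Lane spaces by Theorem \ref{thm:doldthom}(1), it suffices to check that $\widetilde\Lambda$ induces an isomorphism on integral homology, equivalently (by the structure of $SP$ and the Hurewicz/Dold--Thom dictionary) that $\Lambda$ induces an isomorphism $\widetilde H_*(\widehat Z(K;(\underline U,\underline V)))\to \widetilde H_*(\widehat Z(K;(\underline X,\underline A)))$.

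To verify this homology isomorphism I would argue inductively along the filtration $F_t$ of Section \ref{sec:filtration}, using \eqref{eqn:flitration} to write $F_t\widehat Z = \widehat D(\sigma_t)\cup F_{t-1}\widehat Z$ on both sides. The map $\Lambda$ is filtration preserving, so one gets a map of Mayer--Vietoris sequences; the pieces $\widehat D_{(\underline U,\underline V)}(\sigma_t)$ are smash products of the $U_i$ and $V_i$, the corresponding pieces on the other side are (symmetric products of) smash products of the $X_i$ and $A_i$, and the relevant map on each smash factor is $g_i$, which is a homology isomorphism by construction; since smash products of spaces with finitely generated homology satisfy a Künneth isomorphism over $\Z$ up to the usual $\mathrm{Tor}$ terms, and the $g_i$ were chosen to be honest homology isomorphisms, each $\widehat D$-level map is a homology isomorphism, and likewise on the intersections. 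By the five lemma the inductive step goes through, giving that $\Lambda$ is a homology isomorphism on each $F_t$ and hence on the colimit $\widehat Z$. The main obstacle is the middle step: producing the maps $g_i$ with the correct homological behavior and \emph{compatibly for the pair} $(U_i,V_i)\to(SP(X_i),SP(A_i))$ — one must arrange the null-homotopic inclusion $E_i\hookrightarrow C_i$ and the splitting $U_i=B_i\vee C_i$ so that the square relating $V_i\to SP(A_i)$ and $U_i\to SP(X_i)$ commutes on the nose (or at least up to homotopy rel the relevant structure), which forces a careful choice of CW models and a small amount of obstruction theory using that the targets are generalized Eilenberg--Mac Lane spaces; the Künneth bookkeeping over $\Z$ (rather than a field) is the other place where care is needed, though it is routine given finite type.
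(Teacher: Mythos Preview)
Your proposal is correct and follows the paper's strategy closely: build the wedge-decomposable model $(B_i\vee C_i, B_i\vee E_i)$ with strongly isomorphic homology, realize maps $g_i$ into $(SP(X_i),SP(A_i))$ using that the targets are generalized Eilenberg--Mac~Lane spaces, assemble compatible maps $\lambda_\omega$ on the pieces $\widehat D(\omega)$ via the structure map $\bm\zeta$, and pass to the colimit and multiplicative extension.

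The only real divergence is the endgame. You conclude that $\widetilde\Lambda$ is an equivalence by running an induction over the filtration $F_t$ and comparing Mayer--Vietoris sequences with the five lemma; this works, but you must be a little careful that ``map of Mayer--Vietoris sequences'' here means a map of long exact sequences in $\pi_*SP(-)\cong\widetilde H_*(-)$, and that $\widetilde\Lambda$ really restricts compatibly to $SP(\widehat D(\sigma_t))$, $SP(F_{t-1})$, and $SP$ of the intersection (it does, since $\Lambda$ is built from the face-compatible $\lambda_\omega$). The paper instead invokes the Projection Lemma (Lemma~\ref{lem: quasifibrations}, due to Quillen as in Ziegler--\v{Z}ivaljevi\'c): once the restriction $SP(\widehat D_{(\underline U,\underline V)}(\sigma))\to SP(\widehat D_{(\underline X,\underline A)}(\sigma))$ is a homotopy equivalence for every $\sigma\in K$, the map on $SP$ of the colimit is automatically an equivalence. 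This replaces your entire inductive Mayer--Vietoris argument by a single citation, and avoids the bookkeeping about naturality of connecting homomorphisms under $SP$. Your approach buys a more self-contained argument; the paper's buys brevity and cleaner logical dependencies.
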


\begin{rem}
In a forthcoming paper, the authors use this and Corollary \ref{cor:wedge} to describe products in the cohomology 
of a polyhedral product.
\end{rem}

Preliminary results required for the proof of Theorem \ref{thm:Cartan_and_general_homology} will occupy the remainder of this section. We begin with a definition.
\begin{defin}\label{defin:maps between infinite symmetric products}
The pairs $(\underline{U},\underline{V})$ and $(\underline{X},\underline{A})$
are said to have {\em strongly isomorphic homology} provided 

\begin{enumerate}\itemsep2mm
\item there are isomorphisms of singular homology groups
$$\alpha_j: H_*(U_j) \to H_*(X_j),$$ and $$\beta_j: H_*(V_j) \to H_*(A_j),$$
\item there is a commutative diagram

$$\begin{CD}
 \bar{H}_i(V_j)  @>{\lambda{_j}_*}>>  \bar{H}_i(U_j)  \\
  @V{\beta_j}VV        @VV{\alpha_j}V        \\
  \bar{H}_i(A_j) @>{\iota_j}{_*}>>  \bar{H}_i(X_j),
\end{CD} $$
\skp{0.2}
\nd  where  $\lambda_j: V_j \subset U_j$, and $\iota_j: A_j \subset X_j$ are the natural inclusions, and
\item there is an induced morphism of exact sequences
for which all vertical arrows are isomorphisms:
$$\begin{CD}
0 @>{}>>  ker({\lambda_j}_*)  @>{}>>  \bar{H}_i(V_j ) @>{{\lambda_j}_*}>>  \bar{H}_i(U_j ) @>{}>> coker({\lambda_j}_*)@>{}>>  0  \\
  @V{}VV      @V{\beta_j}VV        @V{\beta_j}VV       @VV{\alpha_j}V     @VV{\bar{\alpha}_j}V    @V{}VV  \\
0@>{}>>  ker({\iota_j}_*) @>{}>>  \bar{H}_i(A_j) @>{{\iota_j}*}>>  \bar{H}_i(X_j) @>{}>> coker({\iota_j}_*) @>{}>>  0
\end{CD}$$
\skp{0.2}
\nd where $\bar{\alpha}_j$ is induced by $\alpha_j$.

\item The maps of pairs $(\alpha_j,\beta_j)\colon(H_*(U_j), H_*(V_j) ) \to (H_*(X_j), H_*(A_j) )$
which satisfy conditions $1-3$ are said to {\em induce a strong homology isomorphism}.
\end{enumerate}
\end{defin}
\begin{rem}
The feature of the pairs $(\underline{U},\underline{V})$ and $(\underline{X},\underline{A})$
having strongly isomorphic homology groups implies, via the Kunneth Theorem, that the spaces 
$\widehat{D}_{(\underline{U}, \underline{V})}(\sigma)$ and
$\widehat{D}_{(\underline{X}, \underline{A})}(\sigma)$ have isomorphic homology groups.
\end{rem}

\begin{lem} \label{lem: strongly isomorphic homology and homology of smash products}
Given pointed, path-connected pairs of finite CW-complexes
 $(\underline{X},\underline{A})$, and  $(\underline{U},\underline{V})$
 with strongly isomorphic homology groups, and let $\sigma \in K$ be any face of the simplicial complex $K$,
 then there is an isomorphism of singular homology groups
 $$\bar{H}_{\ast}\big(\widehat{D}_{(\underline{X}, \underline{A})}(\sigma)\big) \longrightarrow
\bar{H}_{\ast}\big(\widehat{D}_{(\underline{U}, \underline{V})}(\sigma)\big).$$
\end{lem}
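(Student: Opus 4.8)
The plan is to use the K\"unneth theorem together with the fact that a tensor/torsion functor applied to isomorphic inputs yields isomorphic outputs. Recall that
$$\widehat{D}_{(\underline{X}, \underline{A})}(\sigma) =\mwedge_{i=1}^{m}W_i,\qquad
W_i=\begin{cases} X_i & i\in\sigma\\ A_i & i\in[m]-\sigma,\end{cases}$$
and similarly $\widehat{D}_{(\underline{U}, \underline{V})}(\sigma)=\mwedge_{i=1}^{m}W_i'$ with $W_i'=U_i$ or $V_i$ according to membership in $\sigma$. Since all the spaces involved are finite CW-complexes, the reduced homology of a smash product fits into a natural short exact sequence (the reduced K\"unneth sequence)
$$0\to \bigoplus_{p+q=n}\overline{H}_p(Y)\otimes\overline{H}_q(Z)\to \overline{H}_n(Y\wedge Z)\to \bigoplus_{p+q=n-1}\mathrm{Tor}\big(\overline{H}_p(Y),\overline{H}_q(Z)\big)\to 0,$$
and this iterates over the $m$ smash factors. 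So the first step is to record that $\overline{H}_*(\widehat{D}_{(\underline{X}, \underline{A})}(\sigma))$ is built functorially out of the graded groups $\overline{H}_*(W_1),\ldots,\overline{H}_*(W_m)$ by iterated application of $\otimes$ and $\mathrm{Tor}$.

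The second step is to feed in the hypothesis. For each $i$, the strongly isomorphic homology hypothesis supplies an isomorphism $\overline{H}_*(W_i)\xrightarrow{\;\cong\;}\overline{H}_*(W_i')$: this is $\alpha_i$ when $i\in\sigma$ and $\beta_i$ when $i\in[m]-\sigma$ (part (1) of Definition \ref{defin:maps between infinite symmetric products}). Applying the bifunctors $\otimes$ and $\mathrm{Tor}$, which preserve isomorphisms, we obtain isomorphisms on each summand of the K\"unneth filtration, hence an isomorphism of the associated graded groups of $\overline{H}_*(\widehat{D}_{(\underline{X}, \underline{A})}(\sigma))$ and $\overline{H}_*(\widehat{D}_{(\underline{U}, \underline{V})}(\sigma))$. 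To promote this to an isomorphism of the actual homology groups (not merely the associated graded), I would induct on the number of smash factors: write $\widehat{D}_{(\underline{X}, \underline{A})}(\sigma) = W_{i_1}\wedge Y$ where $Y$ is the smash of the remaining factors, use the five lemma on the K\"unneth short exact sequence with the vertical maps $\alpha_{i_1}$ or $\beta_{i_1}$ on the left tensor factor and the inductively constructed isomorphism $\overline{H}_*(Y)\cong\overline{H}_*(Y')$ on the right, and conclude that the middle map $\overline{H}_*(W_{i_1}\wedge Y)\to\overline{H}_*(W_{i_1}'\wedge Y')$ is an isomorphism. The base case $m=1$ is immediate from $\alpha_i$ or $\beta_i$.

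The only genuine subtlety — and the place where parts (2) and (3) of the definition, rather than just part (1), are used — is that the K\"unneth short exact sequences are natural only with respect to actual maps of spaces, whereas a priori we are only handed isomorphisms of homology groups together with the compatibility square in part (2). If one instead works with the topological maps $\lambda_j\colon V_j\hookrightarrow U_j$ and $\iota_j\colon A_j\hookrightarrow X_j$ and wants the homology isomorphisms to be compatible with the inclusions $\widehat{D}_{(\underline{U}, \underline{V})}(\sigma)\hookrightarrow\widehat{D}_{(\underline{U}, \underline{V})}(\tau)$ (which is exactly what is needed downstream to pass to colimits and get the equivalence of infinite symmetric products in Theorem \ref{thm:Cartan_and_general_homology}), then the commuting squares of (2) and the morphism of exact sequences in (3) are precisely what make the five-lemma argument go through compatibly with the structure maps $\bm{\ell}$. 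So the main obstacle is bookkeeping: carrying the naturality with respect to face inclusions through the iterated K\"unneth filtration, for which the hypotheses have been set up in advance. For the bare statement of Lemma \ref{lem: strongly isomorphic homology and homology of smash products} as written — just an abstract isomorphism of groups for a fixed $\sigma$ — part (1) of the definition together with the iterated K\"unneth theorem and the five lemma suffices.
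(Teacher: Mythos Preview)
Your proposal is correct and matches the paper's approach: the paper treats this lemma as an immediate consequence of the K\"unneth theorem (stated in the remark just before the lemma, with no further proof given), and your write-up is a careful elaboration of exactly that argument via iterated K\"unneth sequences and the five lemma. Your observation that only part~(1) of Definition~\ref{defin:maps between infinite symmetric products} is needed for the bare statement, while parts~(2) and~(3) are what make the isomorphisms compatible with face inclusions for the later colimit argument, is also in line with the paper's remark that ``the rest of the section is devoted to showing that isomorphisms can be chosen to be suitably compatible.''
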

 
\nd The rest of the section is devoted to showing that isomorphisms can be chosen to be suitably compatible 
to pass to isomorphisms on homology for the full polyhedral product. 

\begin{lem} \label{lem: existence of wedges with strongly isomorphic homology}
Given pointed, path-connected pairs of finite CW-complexes
$(\underline{X},\underline{A})$, there exist wedges of spheres, and mod-$p^r$ Moore spaces
$(\underline{ B \vee C}, \underline{B\vee E})$ together with isomorphisms of singular homology groups
$$\alpha_j \colon H_*(B_j\vee C_j) \to H_*(X_j)$$
and 
$$\beta_j \colon H_*(B_j \vee E_j) \to H_*(A_j),$$ 
\nd which give strong homology isomorphisms, and the pairs   
$(\underline{ B \vee C}, \underline{B\vee E})$  satisfy condition of wedge decomposability in 
Definition \ref{def:wedgedecomp} that the inclusion $E_j \to C_j$ is null-homotopic.
\end{lem}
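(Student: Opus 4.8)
The plan is to build the wedge-decomposable model one vertex at a time, since every condition in Definition \ref{defin:maps between infinite symmetric products} is imposed separately on each index $j$. So I would fix $j$ and produce finite wedges of spheres and mod-$p^r$ Moore spaces $B_j, C_j, E_j$, a null-homotopic inclusion $E_j \hookrightarrow C_j$, and isomorphisms $\alpha_j \colon \bar H_*(B_j \vee C_j) \xrightarrow{\cong} \bar H_*(X_j)$ and $\beta_j \colon \bar H_*(B_j \vee E_j) \xrightarrow{\cong} \bar H_*(A_j)$ fitting into the commutative square of condition (2). Condition (3) is then automatic: in any commutative square of abelian groups whose two vertical maps are isomorphisms, those maps restrict to isomorphisms on the kernels and descend to isomorphisms on the cokernels of the two horizontal maps, so the induced morphism of five-term exact sequences has all vertical arrows isomorphisms.

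The core of the construction is an algebraic normal form for $\varphi := \iota_{j*} \colon \bar H_*(A_j) \to \bar H_*(X_j)$, a homomorphism of finitely generated graded abelian groups that vanish in degree $0$. Working degree by degree and refining generators via the structure theorem for finitely generated abelian groups, I would choose internal direct-sum decompositions $\bar H_*(A_j) = \mathbf{K}_j \oplus \mathbf{M}_j$ and $\bar H_*(X_j) = \mathbf{M}'_j \oplus \mathbf{Q}_j$ with $\mathbf{K}_j = \ker \varphi$, $\mathbf{M}'_j = \operatorname{im} \varphi$, and $\varphi$ restricting to an isomorphism $\mathbf{M}_j \xrightarrow{\cong} \mathbf{M}'_j$; that is, under these identifications $\varphi$ becomes the composite of the projection onto $\mathbf{M}_j$ with the inclusion of $\mathbf{M}'_j$ as a direct summand. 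I would then realize, as finite wedges of spheres and mod-$p^r$ Moore spaces, spaces $B_j, C_j, E_j$ with $\bar H_*(B_j) \cong \mathbf{M}'_j$, $\bar H_*(C_j) \cong \mathbf{Q}_j$, $\bar H_*(E_j) \cong \mathbf{K}_j$, which is possible because every finitely generated graded abelian group trivial in degree $0$ is so realized and all the groups in sight are trivial in degree $0$ by path-connectedness of $X_j$ and $A_j$.

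It remains to arrange the null-homotopic inclusion $E_j \hookrightarrow C_j$ and to assemble the isomorphisms. For the former one cannot use a wedge-summand inclusion (which admits a retraction and so is not null-homotopic); instead I would realize $E_j$ as a low-dimensional subcomplex of a suitably subdivided CW structure on $C_j$ — or, on the homotopy level, replace $C_j$ by the wedge of a cone on $E_j$ with $C_j$ — so that the inclusion is null-homotopic and hence zero on reduced homology, whence $\lambda_j \colon B_j \vee E_j \hookrightarrow B_j \vee C_j$ induces $\operatorname{id}_{\bar H_*(B_j)} \oplus 0$. Then I would set $\alpha_j$ to be the chosen isomorphisms $\bar H_*(B_j) \cong \mathbf{M}'_j$, $\bar H_*(C_j) \cong \mathbf{Q}_j$ on the two summands, and $\beta_j$ to be $(\varphi|_{\mathbf{M}_j})^{-1}$ composed with the isomorphism $\bar H_*(B_j) \cong \mathbf{M}'_j$ on the first summand, and $\bar H_*(E_j) \cong \mathbf{K}_j$ on the second. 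A direct check gives $\alpha_j \circ \lambda_{j*} = \iota_{j*} \circ \beta_j$, so $(\underline{B \vee C}, \underline{B \vee E})$ is wedge decomposable and $(\alpha_j, \beta_j)$ induces a strong homology isomorphism; Lemma \ref{lem: strongly isomorphic homology and homology of smash products} then records the consequence for the smash diagram. The step I expect to be the genuine obstacle — the real heart of the matter — is producing the normal form of $\varphi$: one must simultaneously split $\operatorname{im} \iota_{j*}$ off as a direct summand of $\bar H_*(X_j)$ and complement $\ker \iota_{j*}$ inside $\bar H_*(A_j)$, compatibly with the grading, so that $\iota_{j*}$ takes the projection-onto-a-summand shape that a wedge-decomposable pair's $\lambda_{j*}$ necessarily has; once that is done, the topology is routine.
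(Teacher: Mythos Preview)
Your outline follows the paper's own approach --- realize each cyclic summand of the homology groups by a sphere or a mod-$p^r$ Moore space --- but with considerably more care; the paper's proof essentially just asserts that ``at this level, it is direct to realize maps on homology.'' Your added detail is valuable precisely because it isolates where the real difficulty lies.

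However, the step you flag as ``the genuine obstacle'' is not merely hard: as stated it fails in general. For a wedge-decomposable pair $(B_j\vee C_j,\,B_j\vee E_j)$ with $E_j\hookrightarrow C_j$ null-homotopic, the map $\lambda_{j*}$ is $\operatorname{id}_{\bar H_*(B_j)}\oplus 0$, so $\operatorname{im}\lambda_{j*}=\bar H_*(B_j)$ is automatically a direct summand of $\bar H_*(B_j\vee C_j)$. If the square in condition~(2) of Definition~\ref{defin:maps between infinite symmetric products} commutes with $\alpha_j,\beta_j$ isomorphisms, then $\alpha_j$ carries $\operatorname{im}\lambda_{j*}$ isomorphically onto $\operatorname{im}\iota_{j*}$, so $\operatorname{im}\iota_{j*}$ would have to be a direct summand of $\bar H_*(X_j)$ as well. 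But this is not forced by the hypotheses: take $A_j=S^1$ and $X_j$ the mapping cylinder of the degree-$2$ self-map of $S^1$, so that $\iota_{j*}\colon H_1(A_j)=\mathbb{Z}\to H_1(X_j)=\mathbb{Z}$ is multiplication by~$2$, whose image $2\mathbb{Z}$ is not a summand. Running through the two ways of splitting $\mathbb{Z}\cong \bar H_1(B_j)\oplus \bar H_1(E_j)$ one checks directly that no choice of $B_j,C_j,E_j$ makes the square commute with invertible vertical maps. Thus the normal form you seek for $\iota_{j*}$ --- projection onto a summand followed by inclusion of a summand --- need not exist, and neither your argument nor the paper's sketch goes through without some additional hypothesis (e.g.\ field coefficients, or that each $\operatorname{im}\iota_{j*}$ splits off) or a modification of the model.
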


\begin{proof}
The proof of this lemma follows from the fact that $X_j$, and $A_j$ are finite, path-connected CW-complexes, 
and so all homology groups as well as kernels and cokernels are finite direct sums of cyclic abelian groups. 
Thus $B_j, C_j, E_j$ may be chosen to be wedges of spheres, and  mod-$p^r$ Moore spaces.
Some details are given for completeness.

Consider a pair of pointed path-connected CW-complexes $(X,A)$ together with the induced map in homology
$H_*A \to H_*X.$  Then for any fixed $j \geq 1$, both $H_jA$ and $H_jX$ are finite direct sums of abelian groups.
Any such finitely generated abelian group is a direct sum of cyclic abelian groups either of the form $\Z$ or $\Z/p^r\Z$
for some choice of $n$. In the case $\Z$, then $H_j(S^j) = \mathbb{Z}$. In the case of $\Z/p^r\Z$, then the mod-$p^r$ 
Moore space given by $P^{j+1}(\mathbb{Z}/p^r\mathbb{Z})$ satisfies $H_j\big(P^{j+1}(\mathbb{Z}/p^r\Z)\big) = \Z/p^r\Z.$ 
At this level, it is direct to realize maps on homology. The hypotheses of {\it strongly isomorphic homology} 
gives the naturality required.
\end{proof}

Our next goal is to establish a standard property of the infinite symmetric product.

\begin{lem} \label{lem: infinite symmetric products}
Let $U$ and $X$ be finite, pointed,  path connected  CW-complexes and 
$$\alpha\colon \widetilde{H}_*(U)  \longrightarrow  \widetilde{H}_*(X)$$ 
a homomorphism in singular homology. Then there is a multiplicative map
$$g \colon SP(U) \longrightarrow SP(X)$$ 
which satisfies $\pi_{\ast}(g) = \alpha$, and is a homotopy equivalence if the map
$\alpha$ is an isomorphism.
\end{lem}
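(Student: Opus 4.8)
The plan is to exploit the Dold--Thom theorem (Theorem \ref{thm:doldthom}), which tells us that for a finite pointed path-connected CW-complex $X$, the infinite symmetric product $SP(X)$ is a product of Eilenberg--Mac~Lane spaces $\prod_{q\geq 1}K(H_q(X),q)$, hence an infinite loop space and in particular a homotopy-commutative $H$-space (indeed a topological abelian monoid under concatenation). So a multiplicative map $SP(U)\to SP(X)$ is, up to homotopy, the same data as a map of generalized Eilenberg--Mac~Lane spaces, and such maps can be read off from homology. Concretely, since $\pi_q(SP(X))\cong H_q(X)$ (reduced), giving a multiplicative map $g$ with $\pi_*(g)=\alpha$ amounts to choosing, for each $q$, a cohomology class realizing the composite $\widetilde H_q(U)\xrightarrow{\alpha}\widetilde H_q(X)$ viewed through the Hurewicz/Dold--Thom identification.

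The key steps, in order, are: (1) First I would reduce to the case where $U$ is a wedge of spheres and mod-$p^r$ Moore spaces by \emph{not} changing $U$ at all but instead invoking the fact that $SP$ only sees homology — more precisely, construct $g$ directly. (2) Use the natural inclusion $E_U\colon U\hookrightarrow SP(U)$ of \eqref{eqn:defofE} together with Lemma \ref{lem:multext}: it suffices to produce a map $\phi\colon U\to SP(X)$ inducing $\alpha$ on $\pi_*$ (equivalently on reduced homology, since $\pi_*(SP(X))=\widetilde H_*(X)$ under Dold--Thom), because then Lemma \ref{lem:multext} furnishes a canonical multiplicative extension $\psi=g\colon SP(U)\to SP(X)$ restricting to $\phi$ on $U$, and $\pi_*(g)=\pi_*(\phi)=\alpha$. (3) To build $\phi\colon U\to SP(X)$: note $SP(X)\simeq\prod_{q\geq1}K(H_q(X),q)$, so a map into it is a sequence of cohomology classes; the class in $H^q(U;H_q(X))$ corresponding to $\alpha_q\in\mathrm{Hom}(\widetilde H_q(U),H_q(X))$ exists because $U$ is a finite CW-complex, using the universal coefficient theorem together with the fact that the relevant $\mathrm{Ext}$ obstruction can be absorbed (here one may genuinely want to replace $U$ by a homology-equivalent wedge of spheres and Moore spaces as in Lemma \ref{lem: existence of wedges with strongly isomorphic homology}, on which every homology homomorphism is realized by an actual map, and then precompose with a homology equivalence $U'\to SP(U)$). (4) Finally, if $\alpha$ is an isomorphism, then $g$ induces isomorphisms on all homotopy groups $\pi_q(SP(U))\cong\widetilde H_q(U)\xrightarrow{\cong}\widetilde H_q(X)\cong\pi_q(SP(X))$ between spaces having the homotopy type of CW-complexes (products of Eilenberg--Mac~Lane spaces), so by Whitehead's theorem $g$ is a homotopy equivalence.

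The main obstacle is step (3): realizing an arbitrary homology homomorphism $\alpha$ by an honest map $U\to SP(X)$, since on a general finite complex $U$ not every homomorphism $\widetilde H_*(U)\to\widetilde H_*(X)$ lifts to a map of spaces when $SP(X)$ is replaced by $X$ — but the point is precisely that $SP(X)$, being a generalized Eilenberg--Mac~Lane space, has no such obstruction: maps $U\to K(G,q)$ are classified by $H^q(U;G)$, and $\mathrm{Hom}(\widetilde H_q(U),H_q(X))$ maps onto the relevant summand via UCT. I expect the cleanest route is to first establish the statement when $U$ is a wedge of spheres and mod-$p^r$ Moore spaces (where maps realizing homology maps are constructed by hand, building on the computations $H_j(S^j)=\Z$ and $H_j(P^{j+1}(\Z/p^r))=\Z/p^r$ already recorded in the proof of Lemma \ref{lem: existence of wedges with strongly isomorphic homology}), and then transport along a homology equivalence; the multiplicativity and the Whitehead-theorem conclusion are then formal.
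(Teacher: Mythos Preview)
Your plan shares its skeleton with the paper's proof: build a map $\phi\colon U\to SP(X)$, take the canonical multiplicative extension $g=\psi$ via Lemma~\ref{lem:multext}, and deduce the homotopy equivalence when $\alpha$ is an isomorphism. The genuine gap is in your step~(2): the chain of equalities ``$\pi_*(g)=\pi_*(\phi)=\alpha$'' does not typecheck. Since $\psi\circ E_U=\phi$, what you actually get is $\pi_*(g)\circ h_U=\pi_*(\phi)=\alpha\circ h_U$, where $h_U\colon\pi_*(U)\to\widetilde H_*(U)$ is the Hurewicz map; this pins down $\pi_*(g)$ only on the image of $h_U$, which for a general finite complex $U$ is far from all of $\widetilde H_*(U)$. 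So neither $\pi_*(g)=\alpha$ nor the Whitehead conclusion follows directly from what you have written.

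The paper closes this gap by a different device: rather than computing $\pi_*(\psi)$, it invokes the Milgram/Steenrod decomposition $\widetilde H_*(SP(Y))\cong\bigoplus_{m\ge1}\widetilde H_*(\widehat{SP}^{m}(Y))$ and shows that on homology $\psi_*$ is upper-triangular with respect to this filtration, sending $u\in\widetilde H_*(U)$ (the $m=1$ summand) to $\alpha(u)+\Delta_u$ with $\Delta_u$ in the $m\ge2$ part. From this one reads off that $\psi_*$ is surjective on homology, hence an isomorphism (finitely generated abelian groups of the same isomorphism type), hence $\psi$ is a homotopy equivalence. Your proposed reduction to $U$ a wedge of spheres and Moore spaces would in fact repair your argument for a different reason than the one you give: on such wedges the Hurewicz map is surjective in every degree, so $\pi_*(g)\circ h_U=\alpha\circ h_U$ \emph{does} force $\pi_*(g)=\alpha$. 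You flag this reduction as a convenience for realizing $\phi$, but it is really needed to make your step~(2) go through; the paper's filtration argument avoids the reduction altogether.
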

\begin{proof}
Recall that the reduced singular homology of $SP(X)$ for any pointed, path-connected CW complex $X$ 
is isomorphic to 
$$\moplus_{1 \leq m \leq \infty} \widetilde{H}_{\ast}\widehat{SP}^{(m)}(X)$$ 
where $\widehat{SP}^{(m)}(X)$ denotes the $m$-fold symmetric smash product given by
$$\widehat{SP}^{(m)}(X) =(X \wedge X \wedge \cdots \wedge X)\big/\Sigma_m.$$
This result appears in the thesis of R.~J.~Milgram, implicitly in \cite{doldthom} and in 
\mbox{\cite[Corollary $4.8$]{cmt}}. Also, N.~Steenrod proved, in unpublished notes, that there is a 
homotopy equivalence
$$  SP\big(SP(X)\big) \to SP\Big(\mvee_{1 \leq m < \infty} \widehat{SP}^{(m)}(X)\Big).$$ 
Since $X$ and $U$ are finite, path-connected, pointed CW complexes with isomorphic 
singular homology groups,  there is a homotopy equivalence 
$$\phi_n: SP(\Sigma^n(U)) \longrightarrow SP(\Sigma^n(X))$$ 
for every natural number $n$.  Next, consider the composite 
$$\begin{CD}
\Sigma^n(U)  &@> E >>&  SP(\Sigma^n(U)) &@> {\phi_n} >>& SP(\Sigma^n(X))\\
\end{CD}$$
and denote the canonical multiplicative extension,  (lemma \ref{lem:multext}), of $\phi_n\circ E$ by
$$\psi_n: SP(\Sigma^n(U)) \to SP(\Sigma^n(X)).$$ 
\begin{rem}
This map might not be homotopic to the map $\phi_n$.
\end{rem}
\skp{0.1}
The next step in the proof of Theorem \ref{thm:Cartan_and_general_homology} is a lemma
which allows a direct translation of algebraic properties concerning homology groups to geometric 
maps on the level of symmetric products of polyhedral products.  It follows from the  
Dold-Thom theorem that the map $\psi_n$ satisfies the formula 
$$ \psi_*(u) = \alpha(u) + \Delta_{u} \quad  \text{where} \quad \Delta_{u} \in
\moplus_{2 \leq m}H_{\ast}\big(\widehat{SP}^m(\Sigma^n(X)\big).$$
Here the class $\Delta_{u}$ projects to zero in $H_{\ast}(\Sigma^nX) = 
H_{\ast}(\widehat{SP^1}(\Sigma^nX))$ on identifying 
the reduced homology of a space $\Sigma^n(X)$ with that of $X$.  Thus the canonical multiplicative 
extension $\psi_n: SP(\Sigma^n(U)) \to SP(\Sigma^n(X))$  induces a surjection on homology. 

It follows that $\psi_n$ induces a homology isomorphism since the homology groups of 
source and target are finitely generated abelian groups in each degree
which are abstractly isomorphic where the map is a surjection.
\end{proof}
\newpage
\begin{lem} \label{lem: homological.general.exact,wedge.decomposable}
Suppose that $(\underline{U},\underline{V})$ and $(\underline{X},\underline{A})$ are pointed, connected, 
pairs of finite CW-complexes, with strongly isomorphic homology groups, then  the map of pairs 
$$g: \big(SP(U), SP(V)\big) \longrightarrow \big(SP(X), SP(A)\big)$$ 
arising from lemma \ref{lem: infinite symmetric products}, induces {\it strongly isomorphic homology groups}.
That is, the induced map on homology gives a commutative diagram 
$$\begin{CD}
  \bar{H}_i(SP(V_j) ) @>{\lambda_*}>>  \bar{H}_i(SP(U_j) )  \\
@V{g_*}VV       @VV{g_*}V      \\
\bar{H}_i(SP(A_j)) @>{\iota_*}>>  \bar{H}_i(SP(X_j)) 
\end{CD}$$
together with a second commutative diagram for which all vertical arrows are isomorphisms:
$$\begin{CD}
ker(\lambda_*)  @>{}>>  \bar{H}_i(SP(V_j) ) @>{\lambda_*}>>  \bar{H}_i(SP(U_j) ) @>{}>> coker(\lambda_*)  \\ 
@V{g_*}VV        @V{g_*}VV       @VV{g_*}V     @VV{\bar{g}_*}V \\
  ker(\iota_*) @>{}>>  \bar{H}_i(SP(A_j)) @>{\iota_*}>>  \bar{H}_i(SP(X_j)) @>{}>> coker(\iota_*) 
\end{CD}$$ 
where $\bar{g}_*$ is induced by $g_*$.
\end{lem}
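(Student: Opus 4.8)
The plan is to reduce the claim to a diagram chase in homology, using the fact that the map $g$ of Lemma~\ref{lem: infinite symmetric products} is constructed degree by degree from the algebraic data $(\alpha_j,\beta_j)$, together with naturality of the infinite symmetric product. First I would record what Lemma~\ref{lem: infinite symmetric products} actually delivers: for each $j$, the homomorphisms $\alpha_j\colon \widetilde H_*(U_j)\to\widetilde H_*(X_j)$ and $\beta_j\colon \widetilde H_*(V_j)\to\widetilde H_*(A_j)$ produce multiplicative maps $g_j\colon SP(U_j)\to SP(X_j)$ and $g'_j\colon SP(V_j)\to SP(A_j)$ with $\pi_*(g_j)=\alpha_j$ and $\pi_*(g'_j)=\beta_j$ under the Dold--Thom identification $\pi_*(SP(Y))\cong\widetilde H_*(Y)$. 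The content to be checked is that these can be chosen so that $g'_j$ is (up to homotopy) the restriction of $g_j$ along $SP(V_j)\hookrightarrow SP(U_j)$, i.e.\ that the first square of the lemma commutes, and then that the five-term exact sequence square commutes with all vertical arrows isomorphisms.

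The key steps, in order, are: (1) Observe that $SP$ applied to the null-homotopic-data inclusions is again an inclusion $SP(V_j)\hookrightarrow SP(U_j)$ and $SP(A_j)\hookrightarrow SP(X_j)$, and that by Dold--Thom $\widetilde H_*(SP(Y))$ is the free graded-commutative-type object built from $\widetilde H_*(Y)$ — concretely the direct sum $\bigoplus_{1\le m\le\infty}\widetilde H_*\big(\widehat{SP}^{(m)}(Y)\big)$ appearing in the proof of Lemma~\ref{lem: infinite symmetric products}. (2) Use the hypothesis of strongly isomorphic homology: condition~(2) of Definition~\ref{defin:maps between infinite symmetric products} says $\alpha_j\circ{\lambda_j}_*={\iota_j}_*\circ\beta_j$ on reduced homology, so on the bottom ($m=1$) summand the desired square commutes on the nose. (3) Promote this to the full $SP$ by choosing $g_j,g'_j$ \emph{compatibly}: build $g'_j$ first from $\beta_j$, then extend $\alpha_j$ over the $m$-fold symmetric smash summands so that the inclusion-induced maps intertwine; since $\widehat{SP}^{(m)}$ is a functor and the relevant homology groups are finitely generated in each degree, one can do this inductively on $m$, exactly as in Lemma~\ref{lem: infinite symmetric products} but carrying the inclusion along. (4) For the second diagram, note that $g_j$ and $g'_j$ are isomorphisms on homology (by Lemma~\ref{lem: infinite symmetric products}, since $\alpha_j,\beta_j$ are), hence ${\lambda_j}_*$ on $SP$ has kernel and cokernel mapped isomorphically; the induced map $\bar g_*$ on cokernels is well-defined precisely by commutativity of the first square, and is an isomorphism by the five lemma applied to the morphism of exact sequences $0\to\ker\to \widetilde H_i(SP(V_j))\to \widetilde H_i(SP(U_j))\to\operatorname{coker}\to 0$.

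The main obstacle is step~(3): the map $g_j$ produced abstractly by Lemma~\ref{lem: infinite symmetric products} (via a canonical multiplicative extension of $\phi_n\circ E$ after suspension) is only pinned down by its effect on $m=1$, and there is genuine indeterminacy $\Delta$ in the higher $\widehat{SP}^{(m)}$ summands — the Remark in that lemma's proof flags exactly this. So I cannot simply assert that an arbitrary choice of $g_j$ restricts to an arbitrary choice of $g'_j$. The fix is to \emph{make the choices in the right order}, using that $V_j$ is a subcomplex of $U_j$ of the wedge-decomposable form, so that $U_j\simeq V_j\vee(\text{complement})$ additively and $\widehat{SP}^{(m)}$ of a wedge splits; one builds $g'_j$ on $SP(V_j)$ and then extends over the complementary wedge summands, invoking freeness of the target's homotopy/homology (wedges of spheres and Moore spaces) to realize the chosen algebraic extension geometrically. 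Once the maps are chosen this way, commutativity of the first square holds by construction, commutativity of the five-term square is formal, and the isomorphism statements follow from the five lemma together with the Dold--Thom computation.
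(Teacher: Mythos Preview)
Your proposal is considerably more elaborate than the paper's argument, which consists of a single sentence: the proof ``follows from the fact that $X_j, A_j, U_j$, and $V_j$ are finite connected CW-complexes, and so all homology groups as well as kernels and cokernels are finite direct sums of cyclic abelian groups.'' In other words, the paper treats the construction of $g$ as a map of \emph{pairs} and the commutativity of both diagrams as essentially formal once everything in sight is a finite sum of cyclic groups, and does not engage with the compatibility question you raise in step~(3).

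You are right that there is something to check here: Lemma~\ref{lem: infinite symmetric products} produces, for each space separately, a multiplicative map realizing a prescribed homomorphism on $\pi_*$, and nothing in that lemma guarantees that the map built for $U_j$ restricts along $SP(V_j)\hookrightarrow SP(U_j)$ to the map built for $V_j$. The indeterminacy $\Delta_u$ you flag is exactly the point. So your instinct to build $g'_j$ first and then extend is sound, and is arguably what the paper's one-line proof is implicitly relying on (since $SP(U_j)$ and $SP(X_j)$ are products of Eilenberg--Mac~Lane spaces with finitely generated homotopy, one can realize the commuting square of Definition~\ref{defin:maps between infinite symmetric products}(2) by a commuting square of maps of such products, then pass to multiplicative extensions).

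However, your fix in step~(3) invokes the hypothesis that $(U_j,V_j)$ is of wedge-decomposable form, so that $U_j\simeq V_j\vee(\text{complement})$. That is \emph{not} part of the stated hypotheses of the lemma, which only assumes strongly isomorphic homology. It is true in the application in Section~\ref{sec:gen.case.end}, where $(\underline{U},\underline{V})=(\underline{B\vee C},\underline{B\vee E})$, so your argument would suffice for the use the paper makes of the lemma; but as written it does not prove the lemma in the generality stated. A cleaner route, and the one the paper presumably intends, is to work directly with the Dold--Thom identification $SP(Y)\simeq\prod_q K(H_q(Y),q)$: the commuting square $\alpha_j\circ{\lambda_j}_*={\iota_j}_*\circ\beta_j$ on finitely generated abelian groups can be realized by a strictly commuting square of maps between the corresponding products of Eilenberg--Mac~Lane spaces, and this gives the map of pairs without any wedge-decomposability assumption. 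Once that square is in place, your step~(4) is exactly right and matches the paper's reliance on finite generation.
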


The proof of this lemma follows from the fact that $X_j, A_j, U_j$, and $V_j$ are finite connected CW-complexes, 
and so all homology groups as well as kernels and cokernels are finite direct sums of cyclic abelian groups. 

Finally, we shall need a version of the Projection Lemma due to D.~Quillen.
\samepage{\begin{lem}\cite[Projection Lemma $1.6$]{zz}\;\label{lem: quasifibrations}
Let $\mathcal D$ and  $\mathcal E$
denote finite diagrams of finite CW complexes over the same finite category $\mathfrak{C}$
for which all inclusions in the intersection poset are closed cofibrations. Furthermore
assume that 
$$U = \mcup_{\alpha \in \mathfrak{C}} D_{\alpha}  \quad\text{and}\quad X = \mcup_{\alpha \in \mathcal C} E_{\alpha}$$ 
and that there is a map 
$$\mu\colon SP(U) \longrightarrow SP(X)$$
which restricts to homotopy equivalences on 
$$\mu|_{SP(D_{\alpha})}:SP(D_{\alpha}) \longrightarrow SP(E_{\alpha})$$ 
for  all $\alpha \in\mathfrak{C}$. Then $\mu$ is a homotopy equivalence.
\end{lem}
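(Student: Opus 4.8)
The plan is to argue by induction on the number of distinct subcomplexes $D_{\alpha}\subseteq U$ in the intersection poset, using the Dold--Thom quasifibration theorem (Theorem \ref{thm:doldthom}) to control how $SP(-)$ interacts with the gluings that assemble $U$ out of the $D_{\alpha}$. Two remarks set things up. First, since $\mu$ is a single map, its restrictions $\mu|_{SP(D_{\alpha})}$ are automatically compatible with the diagram structure; hence for every full sub-diagram that is a down-set of the intersection poset, the union of the corresponding $D_{\alpha}$ is a closed subcomplex $U''\subseteq U$, the union of the corresponding $E_{\alpha}$ is a closed subcomplex $X''\subseteq X$, the map $\mu$ carries $SP(U'')$ into $SP(X'')$, and the restricted data again satisfies the hypotheses of the lemma. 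Second, $U$, $X$ and all such subcomplexes are finite and path-connected (as in the applications of the lemma, which is what Theorem \ref{thm:doldthom} needs), so part (2) applies to every closed pair among them, and by part (1) we may identify $\pi_{\ast}SP(Y)$ with $\widetilde{H}_{\ast}(Y)$ throughout.

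For the base case of one subcomplex, $\mu$ is one of the given homotopy equivalences. For the inductive step, pick a subcomplex $D_{\alpha}$ maximal in the intersection poset. By the intersection-poset structure, $U'=\bigcup_{\beta\neq\alpha}D_{\beta}$ is the colimit of a strictly smaller sub-diagram, $L=D_{\alpha}\cap U'=\bigcup_{\gamma<\alpha}D_{\gamma}$ is the colimit of another, and $U=U'\cup_{L}D_{\alpha}$ is a pushout along the closed cofibrations $L\hookrightarrow D_{\alpha}$ and $L\hookrightarrow U'$; the analogous statements hold with $E$ in place of $D$, compatibly with $\mu$, together with $U/U'\cong D_{\alpha}/L$ and $X/X'\cong E_{\alpha}/L'$. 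By the inductive hypothesis, $\mu$ restricts to homotopy equivalences $SP(U')\to SP(X')$ and $SP(L)\to SP(L')$.

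Now apply $SP(-)$. It does not send the pushout square to a pushout square, but Theorem \ref{thm:doldthom}(2) converts the two vertical cofibrations into quasifibrations
$$SP(L)\longrightarrow SP(D_{\alpha})\longrightarrow SP(D_{\alpha}/L),\qquad SP(U')\longrightarrow SP(U)\longrightarrow SP(U/U'),$$
and likewise over $E$, all compatibly with $\mu$ and with the identifications $SP(U/U')\cong SP(D_{\alpha}/L)$ and $SP(X/X')\cong SP(E_{\alpha}/L')$. Comparing the two quasifibrations over $D_{\alpha}$ and $E_{\alpha}$: on the quasifibre $SP(L)\to SP(L')$ and on the total space $SP(D_{\alpha})\to SP(E_{\alpha})$ the maps are homotopy equivalences, so the five lemma applied to the associated long exact sequences in homotopy --- which by Theorem \ref{thm:doldthom}(1) are just the reduced homology long exact sequences of the pairs $(D_{\alpha},L)$ and $(E_{\alpha},L')$ --- shows the induced base map $SP(U/U')\to SP(X/X')$ is a weak equivalence. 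Feeding this, together with the equivalence $SP(U')\to SP(X')$, into the five lemma for the quasifibrations $SP(U')\to SP(U)\to SP(U/U')$ and $SP(X')\to SP(X)\to SP(X/X')$ shows $\mu\colon SP(U)\to SP(X)$ is a weak equivalence. Since each infinite symmetric product here is a product of Eilenberg--MacLane spaces (Theorem \ref{thm:doldthom}(1)), hence of CW homotopy type, Whitehead's theorem upgrades this to a homotopy equivalence, completing the induction.

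The main obstacle is precisely that $SP(-)$ does not preserve pushouts of spaces, so one cannot simply invoke a gluing lemma for colimits; the fix is to replace each gluing square by its Dold--Thom quasifibration (a ``homological pushout'') and to chase five-lemma diagrams on the resulting long exact sequences. A secondary point requiring care is the combinatorics of the intersection poset: one must verify that deleting a maximal subcomplex, and intersecting with it, both produce sub-diagrams to which the inductive hypothesis genuinely applies, and that the resulting square is a pushout along closed cofibrations so that Theorem \ref{thm:doldthom} is available.
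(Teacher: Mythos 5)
The paper offers no proof of this lemma at all --- it is quoted from Ziegler--\v{Z}ivaljevi\'c (and attributed to Quillen) --- so your argument has to stand on its own. Your overall strategy is the right one and is essentially the standard way to prove such a statement: induct over the intersection poset, write $U=U'\cup_{L}D_{\alpha}$ for a maximal piece $D_{\alpha}$, replace each gluing square by the Dold--Thom quasifibrations it induces on symmetric products, chase five-lemma diagrams on the resulting long exact sequences, and finish with Whitehead's theorem. The combinatorial bookkeeping (that $U'$ and $L$ are colimits of strictly smaller sub-diagrams to which the inductive hypothesis applies, and that $U/U'\cong D_{\alpha}/L$) is handled correctly.

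There is, however, one genuine gap: the ``induced base maps'' $SP(U/U')\to SP(X/X')$ and $SP(D_{\alpha}/L)\to SP(E_{\alpha}/L')$ on which both of your five-lemma comparisons rest do not exist for a general $\mu$. The projection $SP(U)\to SP(U/U')$ is the quotient by the submonoid $SP(U')$ (it identifies configurations differing by points of $U'$), not the collapse of the subspace $SP(U')$ to a point; a map $\mu$ that merely carries $SP(U')$ into $SP(X')$ need not respect this identification unless it is multiplicative, which the lemma does not assume (though it does hold for the map the paper actually feeds into this lemma in Section~\ref{sec:gen.case.end}). The repair preserves your structure: run both five-lemma arguments on the long exact homotopy sequences of the \emph{pairs} $\big(SP(D_{\alpha}),SP(L)\big)$ and $\big(SP(U),SP(U')\big)$ and their targets, which are natural for any map of pairs; use Theorem~\ref{thm:doldthom}(2) only to identify $\pi_{\ast}\big(SP(U),SP(U')\big)\cong\pi_{\ast}SP(U/U')\cong H_{\ast}(U,U')$ and to see that the inclusion of pairs $\big(SP(D_{\alpha}),SP(L)\big)\hookrightarrow\big(SP(U),SP(U')\big)$ induces the excision isomorphism on relative homotopy groups. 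A secondary caveat you wave at but should state explicitly: Theorem~\ref{thm:doldthom} as quoted requires path-connected pairs, and for an arbitrary diagram the intersections $L$ need not be connected; this is harmless in the paper's applications, where every piece is a smash product of connected spaces, but it is an extra hypothesis (or requires the unreduced form of Dold--Thom) in the generality in which the lemma is stated.
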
}

\newpage
\section{The proof of Theorem \ref{thm:Cartan_and_general_homology} completed}\label{sec:gen.case.end}
The proof of Theorem \ref{thm:Cartan_and_general_homology} uses  Theorem \ref{thm:main} and the interplay between
the Dold-Thom construction and polyhedral products given in section \ref{sec;connections}. 

Let $K$ be an abstract simplicial complex with $m$ vertices. Assume
that $(\underline{X},\underline{A})$ are pointed, path-connected pairs of finite 
CW-complexes for all $i$. Then by  Lemma \ref{lem: existence of wedges with strongly 
isomorphic homology}  we have
wedges of spheres, and mod-$p^r$ Moore spaces
$(\underline{ B \vee C}, \underline{B\vee E})$ together with isomorphisms of singular homology groups
$$\alpha_j \colon H_*(B_j\vee C_j) \to H_*(X_j) \hspace{5mm}\text{and} \hspace{5mm}
\beta_j \colon H_*(B_j \vee E_j) \to H_*(A_j),$$ 
\nd which give strong homology isomorphisms and the pairs   
$(\underline{ B \vee C}, \underline{B\vee E})$  satisfy condition of wedge decomposability in 
Definition \ref{def:wedgedecomp} that the inclusion $E_j \to C_j$ is null-homotopic.
\skp{0.1}
Next, Lemma \ref{lem: homological.general.exact,wedge.decomposable}  gives a map of pointed pairs 
$$g\colon \big(\underline{SP(B\vee C)},\underline{SP(B\vee E)}\big) \longrightarrow (\underline{SP(X)},\underline{SP(A)})$$ 
which induces a strong isomorphism in homology. Applying the functor $\widehat{D}_{(-, -)}(\sigma)$ to this map, we get a 
morphism.
$$\widehat{D}(\sigma;g) \colon\widehat{D}_{(\underline{SP(B\vee C)}, \;\underline{SP(B\vee E)})}(\sigma)
\longrightarrow \widehat{D}_{(\underline{SP(X)}, \underline{SP(A)})}(\sigma)$$
and, for each $\tau \subset \sigma$, a commutative diagram

$$\begin{CD}
\widehat{D}_{(\underline{SP(B\vee C)}, \;\underline{SP(B\vee E)})}(\tau) @>{\widehat{D}(\tau;g)}> >  
\widehat{D}_{(\underline{SP(X)}, \underline{SP(A)})}(\tau)\\
  @VV{\beta}V        @V{\beta}VV       \\
\widehat{D}_{(\underline{SP(B\vee C)}, \;\underline{SP(B\vee E)})}(\sigma) @>{\widehat{D}(\sigma;g)}> >  
\widehat{D}_{(\underline{SP(X)}, \underline{SP(A)})}(\sigma)\\
\end{CD}$$
\skp{0.1}
\nd  where each horizontal arrow is a homotopy equivalence. 

Further there are induced morphisms
of commutative diagrams via the structure map  $\zeta$ of Lemma \ref{lem:zetadefn}.
$$\begin{CD}
\widehat{D}_{(\underline{SP(B\vee C)}, \;\underline{SP(B\vee E)})}(\sigma) @>>{}>   
\widehat{D}_{(\underline{SP(X)}, \underline{SP(A)})}(\sigma)\\
  @V{\zeta}VV        @V{\zeta}VV       \\
SP\big(\widehat{D}_{(\underline{B\vee C}, \;\underline{B\vee E})}(\sigma)\big) @>>{}>   
SP\big(\widehat{D}_{(\underline{X}, \;\underline{A})}(\sigma)\big)\\
\end{CD}$$
\skp{0.1}
\nd Here, the horizontal arrows are homotopy equivalences by the Dold-Thom theorem   and Lemma
\ref{lem: strongly isomorphic homology and homology of smash products}, Thus  there is 
a map 
\skp{-0.1}
$$SP\big(\mcup_{\sigma \in K}\widehat{D}_{\underline{B\vee C}, \;\underline{B\vee E})}(\sigma)\big) \longrightarrow  
SP\big(\mcup_{\sigma \in K} \widehat{D}_{(\underline{X}, \;\underline{A})}(\sigma)\big)$$
\skp{0.1}
which is a homotopy equivalence by Lemma \ref{lem: quasifibrations}.
Theorem \ref{thm:Cartan_and_general_homology} folllows from this. \hfill $\square$

\newpage
\section{The Hilbert-Poincar\'e series for $Z(K;(X,A))$}\label{sec:hpseries}
We begin by reviewing some of the elementary properties of Hilbert-Poincar\'e series. Assume now that homology is taken with
coefficients in a field $k$ and all spaces are pointed, path connected with the homotopy type of CW-complexes. 
The Hilbert-Poincar\'e series
$$P(X,t) = \sum_{n}\big({\rm dim}_{k}H_{n}(X;k)\big)t^n$$
\nd and the reduced Hilbert-Poincar\'e series
$$ \overline{P}(X,t) = -1 + P(X,t)$$
\nd satisfy the following properties.
\begin{enumerate}\itemsep2mm
\item $P(X,t)P(Y,t) = P(X\times Y),t)$, and
\item  $\overline{P}(X,t) \overline{P}(Y,t)  = P(X\wedge Y,t)$.
\end{enumerate}
\skp{0.2}
\nd For a pair $(X,A)$ satisfying the conditions of Theorem \ref{thm:Cartan_and_general_homology}, we have 
\begin{equation}
\overline{P}\big(\widehat{Z}(K;(\underline{X},\underline{A})),t\big) = \overline{P}\big(\widehat{Z}(K;(\underline{U},\underline{V})),t\big)
\end{equation}
where the pair $(\underline{U},\underline{V}) = \underline{B\vee C}, \;\underline{(B\vee E)})$  is the pair determined by
$(\underline{X},\underline{A})$ and given by Lemma \ref{lem: existence of wedges with strongly isomorphic homology}.
Next, Theorem \ref{thm:main} gives

\begin{equation}
\overline{P}\big(\widehat{Z}(K;(\underline{U},\underline{V})),t\big) =
\sum_{I\leq [m]} \Big[\overline{P}\big(\widehat{Z}(K_I;(\underline{C},\underline{E})_I),t\big)\big) 
\cdot\lprod_{j\in [m]-I}\overline{P}(B_{j},t)\Big]
\end{equation}
We apply now Corollary \ref{cor:wedge} to refine this further and obtain the next theorem.

\begin{thm}\label{thm:poincareseries}
The reduced Hilbert-Poincar\'e series for $\widehat{Z}(K;(\underline{U},\underline{V}))$, and hence for
$\widehat{Z}(K;(\underline{X},\underline{A}))$ is given as follows,
\begin{equation*}\label{eqn:hpszhat}
\overline{P}\big(\widehat{Z}(K;(\underline{U},\underline{V})),t\big) =
\sum_{I\leq [m]}\Big[\sum_{\sigma \in K_I}\big[t\cdot\overline{P}(|\text{lk}_{\sigma}(K_{I})|,t)\cdot
\overline{P}\big(\widehat{D}_{\underline{C},\underline{E}}^{I}(\sigma),t\big)\big]\cdot\lprod_{j\in [m]-I}\overline{P}(B_{j},t)\Big]
\end{equation*}
\nd where here we use the convention that $t\cdot \overline{P}(|\varnothing|,t) = 1$, and
$\overline{P}\big(\widehat{D}_{\underline{C},\underline{E}}^{I}(\sigma),t\big)$ 
can be read off from \eqref{eqn:ced.sigma}. 
\end{thm}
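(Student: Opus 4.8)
\textbf{Proof proposal for Theorem \ref{thm:poincareseries}.}
The plan is to combine the three structural results already in hand with the two multiplicativity properties of reduced Hilbert--Poincar\'e series stated just above the theorem. First I would invoke Theorem \ref{thm:Cartan_and_general_homology}: it produces a wedge decomposable pair $(\underline{U},\underline{V}) = (\underline{B\vee C},\underline{B\vee E})$ with $SP(\widehat{Z}(K;(\underline{U},\underline{V})))\simeq SP(\widehat{Z}(K;(\underline{X},\underline{A})))$, hence an isomorphism $H_*(\widehat{Z}(K;(\underline{X},\underline{A})))\cong H_*(\widehat{Z}(K;(\underline{U},\underline{V})))$, which with field coefficients gives the equality of reduced Hilbert--Poincar\'e series; so it suffices to compute $\overline{P}(\widehat{Z}(K;(\underline{U},\underline{V})),t)$ for the wedge decomposable pair.

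Next I would feed the homotopy equivalence of Corollary \ref{cor:wedge} into the series. Since $\overline{P}$ is additive over wedges (the reduced homology of a wedge is the direct sum), the right-hand side of Corollary \ref{cor:wedge} contributes $\sum_{I\leq[m]}\sum_{\sigma\in K_I}\overline{P}\big(|\mathrm{lk}_\sigma(K_I)|\ast\widehat{D}_{\underline{C},\underline{E}}^I(\sigma),t\big)\cdot\overline{P}\big(\widehat{Z}(K_{[m]-I};(\underline{B},\underline{B})_{[m]-I}),t\big)$, using property (2) $\overline{P}(X,t)\overline{P}(Y,t)=P(X\wedge Y,t)$ for each smash. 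For the link--join term I would use the standard relation $\widetilde H_n(W\ast W')\cong \bigoplus_{p+q=n-1}\widetilde H_p(W)\otimes\widetilde H_q(W')$, which with field coefficients gives $\overline{P}(W\ast W',t)=t\cdot\overline{P}(W,t)\cdot\overline{P}(W',t)$; applied to $W=|\mathrm{lk}_\sigma(K_I)|$ and $W'=\widehat{D}_{\underline{C},\underline{E}}^I(\sigma)$ this yields precisely the factor $t\cdot\overline{P}(|\mathrm{lk}_\sigma(K_I)|,t)\cdot\overline{P}(\widehat{D}_{\underline{C},\underline{E}}^I(\sigma),t)$ appearing in the statement. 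Finally, $\widehat{Z}(K_{[m]-I};(\underline{B},\underline{B})_{[m]-I})=\mwedge_{j\in[m]-I}B_j$ by Theorem \ref{thm:main}, so repeated use of property (2) gives $\overline{P}$ of it equal to $\lprod_{j\in[m]-I}\overline{P}(B_j,t)$; assembling these pieces produces the displayed formula.

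The bookkeeping at degenerate indices is where I would be most careful, and it is the main (minor) obstacle. When $\sigma=\varnothing$ the join factor $|\mathrm{lk}_\varnothing(K_I)|\ast\widehat{D}^I_{\underline{C},\underline{E}}(\varnothing)$ degenerates — $\widehat{D}^I_{\underline{C},\underline{E}}(\varnothing)=\widehat{E}^I$ when $I\neq\varnothing$ and equals $S^0$ when $I=\varnothing$ — and the join with the empty link $|\Delta(\overline{K_I})_{<\varnothing}|$ should be read as the suspension, so $t\cdot\overline{P}(|\varnothing|,t)$ is set to $1$ by convention exactly to absorb the degree shift of that suspension; likewise $I=\varnothing$ contributes the single term $\lprod_{j\in[m]}\overline{P}(B_j,t)$, and empty products/wedges give $S^0$ with $\overline{P}(S^0,t)=1$, consistent with the conventions in Theorem \ref{thm:main}. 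Checking that every convention $t\cdot\overline{P}(|\varnothing|,t)=1$, empty smash $=S^0$, and $\widehat{Z}(K_\varnothing;-)=S^0$ lines up numerically with the join and wedge formulas — rather than any deep topology — is the only thing that needs real attention, and it is routine.
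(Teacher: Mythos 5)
Your proposal is correct and follows essentially the same route as the paper: reduce to the wedge decomposable pair via Theorem \ref{thm:Cartan_and_general_homology}, apply Theorem \ref{thm:main} and then Corollary \ref{cor:wedge}, and convert wedges, smashes, and joins into sums and products of reduced series (the factor $t\cdot\overline{P}(|\text{lk}_{\sigma}(K_I)|,t)\cdot\overline{P}(\widehat{D}^I_{\underline{C},\underline{E}}(\sigma),t)$ coming from the join formula over a field, exactly as you say). Your treatment of the degenerate conventions is consistent with the paper's and in fact spells out more detail than the paper does.
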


Finally, Theorem \ref{thm:bbcgsplitting} gives now the  
Hilbert-Poincar\'e series for $Z(L;(U,V))$ and for $Z(L;(X,A)$, by applying
\eqref{eqn:hpszhat} for each $K = L_J, \; J \subseteq [m]$. 

\newpage
\section{Applications}\label{sec:applications}

\begin{exm}\label{exm:cp4additive}
Consider the composite
 \begin{equation}\label{eqn:cp2cp3}
f\colon \mathbb{C}P^2 \hookrightarrow \mathbb{C}P^3 \to \mathbb{C}P^3/\mathbb{C}P^1.
 \end{equation}
\nd and denote the mapping cylinder of \eqref{eqn:cp2cp3} by $M_f$.  We consider
$\widetilde{H}^{*}\big(\widehat{Z}\big(K;(M_f, \mathbb{C}P^3)\big)\big)$ and 
 $\widetilde{H}^{*}\big(Z(K;(M_f, \mathbb{C}P^3)\big)\big)$,  for any
simplicial complex $K$ on vertices $[m]$ and describe the Poincar\'{e} series for the special case
\begin{equation}\label{eqn:specialk}
K = \big\{\{v_1\},,\{v_2\},\{v_3\},\{v_1,v_2\},\{v_1,v_3\}\big\}.
\end{equation}
For $(X,A) = (M_f, \mathbb{C}P^2)$, we have
\begin{equation}\label{eqn:specificuv}
(U,V) = \Big(S^4\vee S^{6},\; S^{4} \vee S^{2}\Big).
\end{equation}
so that  
$$B = S^4, \;C= S^6 \;\text{and}\; E = S^{2}$$
Theorem \ref{thm:Cartan_and_general_homology} gives now
$$\widetilde{H}^{*}\big(\widehat{Z}\big(K;(M_f, \mathbb{C}P^3)\big)\big) \cong 
\widetilde{H}^{*}\big(\widehat{Z}(K;(B\vee C,B\vee E))\big).$$
Applying Theorem \ref{thm:main}, we get
\begin{align*}\widehat{Z}(K;(B\vee C,B\vee E)) &\xrightarrow{\simeq} 
\mvee_{I\leq [m]}\widehat{Z}\big(K_{I};\big(S^6, \;S^{2}\big)\big)\wedge 
\widehat{Z}\big(K_{[m]- I};(S^4,\;S^4\big)\big)\\
&= \mvee_{I\leq [m]}\widehat{Z}\big(K_{I};\big(S^6,\;S^{2}\big)\big)
\wedge (S^4\big)^{{\wedge}|[m]-I||}
\end{align*}
\nd where the last term represents the $(|[m]-I)|$-fold smash product.
Finally, Corollary \ref{cor:wedge} determines  completely
$$\widehat{Z}\big(K_{I};\big(S^6, \;S^{2}\big)\big)$$ 
by enumerating all  the links $|lk_{\sigma}(K_I)|$.
\skp{0.1}
Next, we describe the  Poincar\'{e} series for $K$ as in \eqref{eqn:specialk}.
According to \eqref{eqn:specificuv} the cohomology of $(M_f, \mathbb{C}P^2)$ satisfies
\begin{equation}\label{eqn:example}
H^{\ast}(M_f) =\mathbb{Z}\{ b_4, c_6\}  \quad \text{and} \quad H^{\ast}(\mathbb{C}P^2))=\mathbb{Z}\{e_2, b_4\}
\end{equation}
\nd where the dimensions of the classes are given by the subscripts.  We denote the classes $\{e_2,b_4,c_6\}$ supported on the vertex $i$ by $\{e^i_2,b_4^i,c_6^i\}$ and illustrate computation using Theorem 
\ref{thm:poincareseries} by determining the summand corresponding to 
\begin{equation}\label{eqn:caseathand}
I=\{2,3\}\quad \rm{and}\quad \sigma = \varnothing.
\end{equation}
\begin{enumerate}[(i)]\itemsep4mm
\item In this case, we have
$$\widehat{D}_{\underline{C},\underline{E}}^{I}(\sigma)=\ E_2\wedge E_3 = S^2\wedge S^2  \quad{\rm{and}}
 \quad \widetilde{H}\big(\widehat{D}_{\underline{C},\underline{E}}^{I}(\sigma)\big) = k\{e_2^2\otimes e_2^3\}$$	
\nd and so we get	$\overline{P}\big(\widehat{D}_{\underline{C},\underline{E}}^{I}(\sigma),t)=t^4$.  
\newpage
\item Next, since $[m]-I=\{1\}$,  we have
$$\mprod_{j\in \{1\}}\overline{P}(B_{j},t) =  \overline{P}(B_{1},t)\;\Longrightarrow\; \overline{P}(b^1_4, t) = t^4.$$
\item Turning to the links, we have
$$|lk_{\varnothing}(K_I)| \;=\; |\{\{2\},\{3\}\}|=S^0$$
\nd so that $t\cdot\overline{P}\big(|lk_{\varnothing}(K_I),t|\big)\;=\; t$.
\end{enumerate}
Finally, for the case at hand \eqref{eqn:caseathand}, Theorem \ref{thm:poincareseries}  contributes $t^9$ to 
the Poincar\'{e} series for $H^\ast\big(\widehat{Z}(K; \xa)\big)$. 	\nd Continuing in this way, we arrive at the 
(reduced) Poincar\'{e} series
$$\overline{P}\big(H^*(\widehat{Z}(K; (M_f, \mathbb{C}P^2)),t\big) = t^9+t^{11}+3t^{12}+5t^{14}+2t^{16}.$$ 
\end{exm}

Theorem \ref{thm:main} applies particularly well in cases where spaces have unstable attaching maps.
\begin{exm}
The homotopy equivalence $S^{1}\wedge Y \simeq \Sigma(Y)$ implies homotopy equivalences
\begin{equation}\label{eqn:he1}
\Sigma^{mq}\big(\widehat{Z}(K;(\underline{X},\underline{A}))\big)
\longrightarrow \widehat{Z}\big(K;\big(\underline{\Sigma^{q}(X)},\underline{\Sigma^{q}(A)}\big)\big)
\end{equation}
\nd where as usual,  $m$  is the number of vertices of $K$. Recall now that 
$SO(3) \cong \mathbb{R}\rm{P}^{3}$ 
and consider the pair
$$(X,A) = \big(SO(3), \mathbb{R}\rm{P}^{2}\big),$$
\nd for which there is a well known homotopy equivalence of pairs,  \cite[Section 1]{mukai}, 
\begin{equation}\label{eqn:he2}
\big(\Sigma^{2}\big(SO(3)\big), \;\Sigma^{2}\big(\mathbb{R}\rm{P}^{2}\big)\big) \longrightarrow 
\big(\Sigma^{2}\big(\mathbb{R}\rm{P}^{2}\big)\vee \Sigma^{2}(S^{3}),\;
\Sigma^{2}\big(\mathbb{R}\rm{P}^{2}\big)\big),
\end{equation}
\nd which makes the pair $\big(SO(3), \mathbb{R}\rm{P}^{2}\big)$ {\em stably wedge decomposable\/}.
\nd Next, combining \eqref{eqn:he1} and \eqref{eqn:he2}, we get a homotopy equivalence
$$\Sigma^{2m}\big(\widehat{Z}(K;(SO(3),\mathbb{R}\rm{P}^{2}))\big)
\longrightarrow 
\widehat{Z}\big(K;\big(\Sigma^{2}(\mathbb{R}\rm{P}^{2})\vee \Sigma^{2}(S^{3}),\;
\Sigma^{2}(\mathbb{R}\rm{P}^{2})\big).$$
\nd Finally, Theorem \ref{thm:Cartan_and_general_homology} allows us to conclude that 
$\widehat{Z}(K;(SO(3),\mathbb{R}\rm{P}^{2}))\big)$,
and hence the polyhedral product $Z(K;(SO(3),\mathbb{R}\rm{P}^{2}))$,
is stably a wedge of smash products of $S^{3}$ and $\mathbb{R}\rm{P}^{2}$.
\end{exm}
\skp{0.1}
Similar splitting results exist for
the polyhedral product whenever the spaces $X$ and $A$ split after finitely many suspensions. In particular, the fact that
$\Omega^{2}S^{3}$ splits stably into a wedge of Brown--Gitler spectra implies that the polyhedral product 
$Z\big(K; (\Omega^{2}S^{3}, \ast)\big)$ splits  stably  into a wedge of smash products of Brown--Gitler spectra.

\newpage
\bibliographystyle{amsalpha}

\begin{thebibliography}{99}
\bibitem{bbcg1}  A.~Bahri, M.~Bendersky, F.~R.~Cohen, and S.~Gitler, {\em Decompositions of the 
polyhedral product functor with applications to moment-angle complexes and related spaces}, PNAS, July, 2009, 106:12241--12244.
\bibitem{bbcg2} A.~Bahri, M.~Bendersky, F.~R.~Cohen and S.~Gitler, {\em The polyhedral product
functor: a method  of computation for moment-angle complexes, arrangements and 
related spaces\/}. Advances in Mathematics, {\bf 225} (2010), 1634--1668.
\bibitem{bbcg3} A.~Bahri, M.~Bendersky, F.~Cohen and S.~Gitler, {\em Cup products
in generalized moment-angle complexes\/}. Mathematical Proceedings of the Cambridge
Philosophical Society, {\bf 153}, (2012), 457--469.
\bibitem{bbcg10} A.~Bahri, M.~Bendersky, F.~R.~Cohen and S.~Gitler, {\em A spectral sequence for polyhedral products\/}. 
Advances in Mathematics, {\bf 308\/}, (2017), 767--814
\bibitem{bbcgsurvey}  A.~Bahri, M.~Bendersky and F.R.~~Cohen, 
{\em Polyhedral products and features of their homotopy theory\/}. Handbook of Homotopy Theory, Haynes Miller (ed.)
Chapman and Hall/CRC, 2019.
 \bibitem{bbp} I.~Baskakov, V.~Buchstaber and T.~Panov, {\em Cellular cochain complexes and torus actions\/},
 Uspeckhi.~Mat.~Nauk, {\bf 59}, (2004), no.~3, 159--160 (russian); Russian Math.~Surveys {\bf 89}, (2004), no.~3,
 562--563 (English translation).
 \bibitem{bp3} V.~Buchstaber and T.~Panov, {\em Actions of tori, combinatorial topology and homological algebra}, 
Russian Math. Surveys, {\bf 55} (2000), 825--921
\bibitem{cai1} L.~Cai,  {\em On products in a real moment-angle manifold\/}, J.~Math.~Soc.~Japan, {\bf 69}(2), (2017), 
503--528.
\bibitem{caichoi} L.~Cai and S.~Choi, {\em Integral cohomology groups of real toric manifolds and small covers\/}.
Online at: https://arxiv.org/pdf/1604.06988.pdf
\bibitem{cmt} F.~R.~Cohen, J.~P.~May and L.~R.~Taylor, {\em Splitting of certain spaces $CX$}, 
Math. Proc. Camb. Phil. Soc., {\bf 84}, 465--496
\bibitem{doldthom} A.~Dold and R.~Thom, {\em Quasifaserungen und unendliche symmetrische Produkte\/}, 
Annals of Mathematics. Second Series, {\bf 67}, 239--281, doi:10.2307/1970005
\bibitem{franz}  M.~Franz, {\em On the integral cohomology of smooth toric varieties\/}.\\ Online at:
https://arxiv.org/format/math/0308253 
\bibitem{gtshifted} J.~Grbi\'c and S. ~Theriault, {\em The homotopy type of the complement of a coordinate subspace 
arrangement}, Topology {\bf 46}, (2007), 357--396.
\bibitem{gt} J.~Grbi\'c and S.~Theriault, {\em The homotopy type of the polyhedral product for shifted complexes\/},
Advances in Mathematics, {\bf 245}, (2013) 690-715. DOI: 10.1016/j.aim.2013.05.002
\bibitem{ikshifted} K.~Iriye and D.~Kishimoto, {\em  Decompositions of polyhedral products for shifted complexes\/}, Advances
in  Mathematics, {\bf 245} (2013), 716--736.
\bibitem{ik5} K.~Iriye and D.~Kishimoto, {\em Fat wedge filtrations and decomposition of polyhedral products\/},
To appear in Kyoto J. Math. (DOI:10.1215/21562261-2017-0038)
\bibitem{ldm1} S.~L\'opez de Medrano, {\em Topology of the intersection of quadrics in} 
$\mathbb{R}^n$, Algebraic Topology, Arcata California, (1986), Lecture Notes in Mathematics,
{\bf 1370}  Springer-Verlag, (1989),  280--292.
\bibitem{mukai} J.~Mukai, {\em Some homotopy groups of the double suspension of the real projective space
$\mathbb{R}{\rm P}^6$}, $10^{\text{\tiny th}}$  Brazilian Topology Meeting, (S\~{a}o Carlos, 1996). Mat. Contemp. {\bf 13} (1997), 
235--249. 
\bibitem{zheng} Q.~Zheng, {\em The homology coalgebra and cohomology algebra of generalized 
moment-angle complexes\/}, Journal of Pure and Applied Algebra, (2012).
\bibitem{zheng2} Q.~Zheng, {\em The cohomology algebra of polyhedral product spaces}, Journal of Pure and Applied Algebra,
{\bf 220},  (2016), 3752--3776 
\newpage
\bibitem{zz} G.~M.~Ziegler and R.~T.~\v{Z}ivaljevi\'c, {\em Homotopy types of subspace arrangements via diagrams 
of spaces},  Math.~Ann.~ {\bf 295}, 527--548 (1993). {Online at: \url{https://doi.org/10.1007/BF01444901}}
\end{thebibliography}

\end{document}